

\documentclass{mathscan} 

\usepackage[utf8]{inputenc} 

\usepackage{lipsum}


\usepackage{geometry} 
\geometry{a4paper} 

\usepackage{graphicx} 

\usepackage[english]{babel}


\usepackage{booktabs} 
\usepackage{array} 
\usepackage{paralist} 
\usepackage{verbatim} 
\usepackage{subfig} 

\usepackage{caption}

\usepackage{amsfonts,amsmath,amssymb,bbm,amsthm,amsbsy}

\usepackage{mathtools}


\makeatletter							
\newtheorem*{rep@theorem}{\rep@title}
\newcommand{\newreptheorem}[2]{%
\newenvironment{rep#1}[1]{%
 \def\rep@title{#2 \ref{##1}}%
 \begin{rep@theorem}}%
 {\end{rep@theorem}}}
\makeatother

\newtheorem{thm}{Theorem}[section]

\newtheorem{prop}[thm]{Proposition}

\newtheorem{claim}[thm]{Claim}

\newtheorem*{thm*}{Theorem}
\newtheorem*{lemma*}{Lemma}
\newtheorem*{prop*}{Proposition}
\newtheorem*{corr*}{Corrolary}
\newtheorem*{claim*}{Claim}

\newtheorem{theorem}{Theorem}

\theoremstyle{remark}
\newtheorem{rmk}[thm]{Remark}


\newtheorem*{rmk*}{Remark}
\newtheorem*{conj*}{Conjecture}
\newtheorem*{quest*}{Question}

\theoremstyle{definition}
\newtheorem{defn}[thm]{Definition}
\newtheorem{exmp}[thm]{Example}

\newtheorem*{defn*}{Definition}
\newtheorem*{exmp*}{Example}

\newreptheorem{theorem}{Theorem}
\newreptheorem{corollary}{Corollary}
\newreptheorem{proposition}{Proposition}

\usepackage{fancyhdr} 
\pagestyle{fancy} 
\lhead{}\chead{}\rhead{}
\lfoot{}\cfoot{\thepage}\rfoot{}


\usepackage{tikz}
\usepackage{tikz-cd}

\usepackage{IEEEtrantools}

\newenvironment{equ*}[1]{\begin{IEEEeqnarray*}{#1}}{\end{IEEEeqnarray*}}

\newcommand{\R}{\mathbb{R}}

\newcommand{\Z}{\mathbb{Z}}

\newcommand{\Ecl}{\mathcal{E}}

\newcommand{\Hrm}{\mathrm{H}}

\newcommand{\col}{\colon}

\DeclareMathOperator{\Heis}{\mathcal{H}_3}

\DeclareMathOperator{\erm}{\rm{e}}
\DeclareMathOperator{\crm}{\rm{c}}
\DeclareMathOperator{\im}{\rm{im}}
\DeclareMathOperator{\Erm}{\rm{E}}
\DeclareMathOperator{\drm}{\rm{d}}
\DeclareMathOperator{\zrm}{\rm{z}}
\DeclareMathOperator{\Zrm}{\rm{Z}}
\DeclareMathOperator{\orm}{\rm{o}}
\DeclareMathOperator{\Aut}{\rm{Aut}}
\DeclareMathOperator{\Out}{\rm{Out}}
\DeclareMathOperator{\Inn}{\rm{Inn}}



\author{Nicolaus Heuer}
\date{\today}
\title{Low-Dimensional Bounded Cohomology and Extensions of Groups}
\address{Department of Mathematics\\
  University of Oxford}
\email[N.~Heuer]{heuer@maths.ox.ac.uk}

\begin{document}

\begin{abstract}
Bounded cohomology of groups was first studied by Gromov in 1982 in his seminal paper \cite{gromov}. 
Since then it has sparked much research in Geometric Group Theory.
However, it is notoriously hard to explicitly compute bounded cohomology, even for most basic ``non-positively curved'' groups.
On the other hand, there is a well-known interpretation of \emph{ordinary} group cohomology in dimension $2$ and $3$ in terms of group extensions.
The aim of this paper is to make this interpretation available for \emph{bounded} group cohomology.
This will involve \emph{quasihomomorphisms} as defined and studied by Fujiwara--Kapovich \cite{fk}.
\end{abstract}

\maketitle

\section{Introduction}

In \cite{gromov} Gromov studied bounded cohomology of groups in connection to minimal volume of manifolds.
Since then bounded cohomology has been established as an independent active research field due to its connection to other areas in Geometric Group Theory.
Most prominent applications include stable commutator length (\cite{calegari}), circle actions (\cite{ghys_fr}, \cite{ghys}, \cite{circle-actions}) and the Chern Conjecture. See \cite{monod_invitation} and \cite{frigerio} for an introduction to the topic.

For a group $G$ and a normed $G$-module $V$, denote by $\Hrm_b^n(G,V)$ the $n$-dimensional bounded cohomology of $G$ with coefficients in $V$; see Subsection \ref{subsec:bounded cohomology}.
$\Hrm_b^n(G,V)$ is notoriously hard to compute explicitly.
Consider the most most basic case of $V = \R$ with a trivial $G$ action.
If $G$ is amenable then it is known that $\Hrm_b^n(G,\R)=0$ for all $n \geq 1$.
On the other hand if $G$ is ``non-positively curved'' then $\Hrm_b^2(G,\R)$ and $\Hrm_b^3(G,\R)$  are typically infinite dimensional as an $\R$-vectorspace, for example for acylindrically hyperbolic groups; see \cite{hull_osin} and
\cite{fps}. 
However, there is no full characterisation of all bounded classes in $\Hrm^n_b(G, \R)$ for $n=2,3$.
For $n \geq 4$, $\Hrm_b^n(G,\R)$ is usually fully unknown, even if $G$ is a non-abelian free group. 

On the other hand, for \emph{ordinary} $n$-dimensional group cohomology $\Hrm^n(G,V)$ there is a well-known characterisation for $n=2,3$ in terms of
 \emph{group extensions}. The aim of this paper is to make this well-known correspondence available for \emph{bounded} cohomology. For this, we first recall the classical connection between group extensions and ordinary group cohomology.

\begin{defn} \label{defn:equivalence of group extensions}
An \emph{extension} of a group $G$ by a group $N$ is a short exact sequence of groups
\begin{align} \label{equ:extension}
1 \to N \overset{\iota}{\to} E \overset{\pi}{\to} G \to 1.
\end{align}
We say that two group extensions
$1 \to N \overset{\iota_1}{\to} E_1 \overset{\pi_1}{\to} G \to 1$
and
$1 \to N \overset{\iota_2}{\to} E_2 \overset{\pi_2}{\to} G \to 1$
of $G$ by $N$
are \emph{equivalent}, if there is an isomorphism $\Phi \col E_1 \to E_2$ such that the diagram
\[
\begin{tikzcd}
 &  & E_1 \arrow[rd, "\pi_1"] \arrow[dd, "\Phi"] &  &  \\
1 \arrow[r] & N \arrow[ru, "\iota_1"] \arrow[rd, "\iota_2"] &   &  G \arrow[r] & 1 \\
 &  & E_2 \arrow[ru, "\pi_2"] &   & 
\end{tikzcd}
\]
commutes. 
\end{defn}

Any group extension of $G$ by $N$ induces a homomorphism $\psi \col G \to \Out(N)$; see Subsection \ref{subsec:Basic properties Group Extensions}.
Two equivalent extensions of $G$ by $N$ induce the same such map $\psi \col G \to \Out(N)$. 
We denote by $\Ecl(G,N,\psi)$ the set of group extensions of $G$ by $N$ which induce $\psi$ under this equivalence.
If there is no danger of ambiguity we do not label the maps of the short exact sequence i.e.\ we will write $1 \to N \to E \to G \to 1$ instead of (\ref{equ:extension}).

It is well-known that one may fully characterise $\Ecl(G,N,\psi)$ in terms of ordinary group cohomology:
\begin{theorem} \label{thm:classical group extensions}
 Let $G$ and $N$ be groups and let $\psi \col G \to \Out(N)$ be a homomorphism. Furthermore, let $Z = Z(N)$ be the centre of $N$ equipped with the action of $G$ induced by $\psi$. 
Then there is a class $\omega = \omega(G,N,\psi) \in \Hrm^3(G,Z)$, called \emph{obstruction}, such that $\omega = 0$ in  $\Hrm^3(G,Z)$ if and only if
 $\mathcal{E}(G,N,\psi) \not = \emptyset$.
In this case there is a bijection between the sets $\Hrm^2(G,Z)$ and $\mathcal{E}(G,N,\psi)$.
\end{theorem}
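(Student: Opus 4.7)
The plan is to follow the classical Eilenberg--MacLane strategy. I would start by choosing a set-theoretic section $\sigma \col G \to \Aut(N)$ of the composition $\Aut(N) \sur \Out(N)$ with the property $\sigma(1_G) = \mathrm{id}_N$, so that the class of $\sigma(g)$ in $\Out(N)$ equals $\psi(g)$. Because $\sigma$ is generally not a homomorphism, for every $g,h \in G$ the automorphism $\sigma(g)\sigma(h)\sigma(gh)^{-1}$ is inner, and we may pick an element $f(g,h) \in N$ such that $\sigma(g)\sigma(h) = c_{f(g,h)} \sigma(gh)$, where $c_n$ denotes conjugation by $n$. Expanding the associativity identity $(\sigma(g)\sigma(h))\sigma(k) = \sigma(g)(\sigma(h)\sigma(k))$ shows that
\[
\omega(g,h,k) := f(g,h)\, f(gh,k)\, f(g,hk)^{-1}\, \sigma(g)(f(h,k))^{-1}
\]
is conjugation-trivial in $N$, i.e.\ lies in $Z = Z(N)$. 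A routine but careful calculation verifies that $\omega$ is a $3$-cocycle for the $G$-action on $Z$ coming from $\psi$, and a second calculation shows that replacing $\sigma$ or the choice of $f$ changes $\omega$ by a coboundary, so $[\omega] \in \Hrm^3(G,Z)$ depends only on $(G,N,\psi)$.

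Next I would show that $[\omega] = 0$ is equivalent to $\Ecl(G,N,\psi) \neq \emptyset$. If an extension $1 \to N \to E \to G \to 1$ exists, lift $G \to E$ set-theoretically by $g \mapsto \tilde g$ with $\tilde 1 = 1$; taking $\sigma(g)(n) = \tilde g n \tilde g^{-1}$ and $f(g,h) = \tilde g \tilde h \widetilde{gh}^{-1}$, associativity in $E$ forces $\omega \equiv 0$ on the nose. Conversely, if $[\omega] = 0$ then after modifying $f$ by an appropriate $2$-cochain valued in $Z$ we may assume $\omega \equiv 0$; I then define $E = N \times G$ as a set with multiplication
\[
(n_1, g_1)\cdot (n_2, g_2) = \bigl(n_1\, \sigma(g_1)(n_2)\, f(g_1,g_2),\; g_1 g_2\bigr),
\]
and verify that the vanishing of $\omega$ is exactly associativity, giving the desired extension in $\Ecl(G,N,\psi)$.

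Finally, assuming $\Ecl(G,N,\psi) \neq \emptyset$, I would fix a base extension $E_0$ with associated data $(\sigma_0, f_0)$ and set up a bijection with $\Hrm^2(G,Z)$. Given any other extension $E$, choose a set-theoretic section so that its automorphism-lift coincides with $\sigma_0$ (which is possible since two lifts of $\psi$ differ by an inner automorphism); the difference $\beta(g,h) := f(g,h) f_0(g,h)^{-1}$ then lies in $Z$, and the $2$-cocycle condition for $\beta$ comes from both $f$ and $f_0$ satisfying the same (now trivial) obstruction equation. Conversely, any $Z$-valued $2$-cocycle $\beta$ yields a new extension by twisting the multiplication on $E_0$ by $\beta$. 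The last step is to check that equivalence of extensions in the sense of Definition \ref{defn:equivalence of group extensions} corresponds exactly to cohomologous cocycles $\beta$; this reduces to showing that modifying the section by an element of $Z^1(G,Z)$ is precisely the freedom encoded by isomorphisms of extensions fixing $N$ and $G$.

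The most delicate step, and the one I would write out with the most care, is the verification that the obstruction class is well-defined, since three independent choices enter the construction (the lift $\sigma$, the cochain $f$, and later the identification of two lifts arising from different extensions), and each must be shown to modify $\omega$ by an explicit coboundary. Everything else is direct verification once the cocycle bookkeeping is consistent.
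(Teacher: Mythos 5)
Your proposal follows the classical Eilenberg--MacLane strategy via automorphism lifts $\sigma \col G \to \Aut(N)$, a choice of $f$, and an obstruction $3$-cocycle $\omega$, which is exactly the route the paper cites from Brown for this statement and then reuses in Section 3 (phrased there via ``non-abelian cocycles'') to establish the bounded analogue. One small slip in the final step: the freedom in replacing a section compatible with a fixed lift $\sigma_0$ is given by an arbitrary cochain in $C^1(G,Z)$, not a cocycle in $Z^1(G,Z)$; what matters is that its coboundary sweeps out precisely $B^2(G,Z)$, which is the ambiguity you need to quotient by to land in $\Hrm^2(G,Z)$.
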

Theorem \ref{thm:classical group extensions} may be found in Theorem 6.6 of \cite{brown}, see also \cite{maclane_original}.
Moreover, for a $G$-module $Z$ it is possible to characterise $\Hrm^3(G,Z)$ in terms of these obstructions: 
\begin{theorem} \label{thm:classical obstructions}
For any $G$-module $Z$ and any $\alpha \in \Hrm^3(G, Z)$  there is a group $N$ with $Z = Z(N)$ and a homomorphism $\psi \col G \to \Out(N)$ extending the action of $G$ on $Z$ such that $\alpha = \omega(G, N, \psi)$.
\end{theorem}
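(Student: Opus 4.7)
The plan is to realise $\alpha$ as an obstruction by constructing $N$ and $\psi$ explicitly from a cocycle representative, so that the $3$-cocycle identity for $\alpha$ is precisely the relation needed to make $\psi$ a homomorphism to $\Out(N)$.

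First I would fix a normalised representative $c \col G^3 \to Z$ of $\alpha$, so that $c(g,h,k) = 0$ whenever some entry equals $1_G$. Then I would build $N$ as follows. Let $F$ be the free group on the set $\{a_{g,h} : g, h \in G \setminus \{1\}\}$, augmented with two additional free generators in case $|G| \leq 2$ so that $F$ has rank at least two. Define $N := F \times Z$; since $Z(F) = 1$, we get $Z(N) = Z$ canonically, and the prescribed $G$-action on $Z$ is ready to be extended.

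Next, for each $g \in G$, I would specify an endomorphism $\phi_g \col N \to N$ on generators by
\begin{align*}
\phi_g(a_{h,k}) &:= a_{g,h} \cdot a_{gh,k} \cdot c(g,h,k) \cdot a_{g,hk}^{-1},\\
\phi_g(z) &:= g \cdot z \quad (z \in Z),
\end{align*}
with the convention $a_{1,h} = a_{g,1} := 1$. Because $F$ is free and $\phi_g(Z) \subseteq Z(N)$, these formulas uniquely determine a group endomorphism of $N$. The key identity $\phi_g \phi_h = \mathrm{conj}_{a_{g,h}} \circ \phi_{gh}$, where $\mathrm{conj}_x$ denotes conjugation by $x$, is then verified on generators: after cancelling a pair $a_{g,hk}^{-1} a_{g,hk}$ and pulling the $Z$-valued factors (which are central in $N$) to one side, the equality reduces precisely to the $3$-cocycle condition $d c = 0$ evaluated at $(g,h,k,\ell)$.

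Once this identity holds, the normalisation $\phi_{1_G} = \mathrm{id}_N$ forces $\phi_{g^{-1}} \phi_g = \mathrm{conj}_{a_{g^{-1}, g}}$ and $\phi_g \phi_{g^{-1}} = \mathrm{conj}_{a_{g, g^{-1}}}$, both of which are automorphisms; hence $\phi_g \in \Aut(N)$. The map $\psi(g) := [\phi_g] \in \Out(N)$ is then a homomorphism whose restriction to $Z = Z(N)$ is the given $G$-module structure. A direct computation with $m(g,h) := a_{g,h}$ as coset representatives shows that the relation $\phi_g(m(h,k)) \cdot m(g, hk) = m(g,h) \cdot m(gh, k) \cdot \omega(g,h,k)$ tautologically gives $\omega(g,h,k) = c(g,h,k)$, so $\omega(G, N, \psi) = \alpha$. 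The main obstacle is the algebraic check that $\phi_g \phi_h$ and $\mathrm{conj}_{a_{g,h}} \phi_{gh}$ agree on each generator $a_{k,\ell}$; once the cocycle identity is used to collapse the central terms, the rest of the argument is routine, with the small-$|G|$ centre condition being a minor bookkeeping issue.
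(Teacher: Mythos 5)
Your construction follows essentially the same route as the paper's proof of the bounded analogue (Theorem \ref{theorem:obstructions}), which is the Eilenberg--MacLane construction from \cite{maclane}: set $N = F \times Z$ with $F$ free on symbols indexed by pairs of nontrivial elements, define $\phi_g$ on generators by the formula twisted by a normalised $3$-cocycle, and observe that the condition that $\phi_g \phi_h$ and $\phi_{gh}$ differ by conjugation by $a_{g,h}$ is exactly $\delta^3 c = 0$. The identification of the resulting obstruction with $c$ is also the same. So in the generic case your proof is correct and coincides with what the paper does. (For the classical statement you were asked to prove, the paper itself only cites \cite{brown}; the construction in Section \ref{sec:examples and obstructions} is the one your argument is matching.)

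Where your argument has a genuine gap is the edge case $|G| = 2$, where $F$ would have rank one (a single symbol $a_{s,s}$ for $G = \{1,s\}$), so $Z(F) \ne 1$ and hence $Z(N) \ne Z$. You propose to ``augment $F$ with two additional free generators,'' but you never specify how each $\phi_g$ acts on the new generators, and this cannot be left unspecified. The obvious choice of having $\phi_s$ fix an extra generator $b$ is inconsistent with the key identity: it forces $\phi_s \phi_s (b) = b$, whereas the identity requires $\phi_s \phi_s (b) = a_{s,s}\, b\, a_{s,s}^{-1}$, and these are distinct elements of the free group. If the key identity fails on the extra generators, neither the fact that $\phi_s$ is an automorphism nor the fact that $\psi$ is a homomorphism follows, so the construction collapses for this value of $|G|$. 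The paper avoids this by enlarging $G$ to $\tilde{G} = G \times \Z_2$, pulling the cocycle back along the projection, running the construction on $\tilde{G}$ (where the free group now has rank $9$), and then precomposing $\tilde{\psi}$ with the inclusion $G \hookrightarrow \tilde{G}$. Your route can in fact be salvaged --- for example by setting $\phi_s(b_1) = b_2$ and $\phi_s(b_2) = a_{s,s}\, b_1\, a_{s,s}^{-1}$ and verifying the identity also holds on $a_{s,s}$ using the normalised cocycle relation $s \cdot c(s,s,s) + c(s,s,s) = 0$ --- but this needs to be spelled out; it is not merely a ``minor bookkeeping issue.''
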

Theorem \ref{thm:classical obstructions} may be found in \cite{brown}, Section IV, 6.
In other words, any three dimensional class in ordinary cohomology arises as an obstruction.

The aim of this paper is to derive analogous statements to Theorem \ref{thm:classical group extensions} and Theorem \ref{thm:classical obstructions} involving \emph{bounded} cohomology.
This will use \emph{quasihomomorphisms} as defined and studied by Fujiwara--Kapovich in \cite{fk}. 
 Let $G$ and $H$ be groups. A set-theoretic function $\sigma \col G \to H$ is called \textit{quasihomomorphism} if the set 
 \[
  D(\sigma) = \{\sigma(g) \sigma(h) \sigma(gh)^{-1} | g,h \in G \}
 \]
is finite.
We note that this is not the original definition of \cite{fk} but both definitions are equivalent; see Proposition \ref{prop:equivalent qhm} and Subsection \ref{subsec:quasimorph and quasihomomorph}.
\begin{defn} \label{defn:bounded extension}
We say that an extension $1 \to N \overset{\iota}{\to} E \overset{\pi}{\to} G \to 1$ of $G$ by $N$ is \emph{bounded}, if there is a (set theoretic) section $\sigma \col G \to E$ such that
\begin{itemize}
\item[(i)] $\sigma \col G \to E$ is a quasihomomorphism and
\item[(ii)] the map $\phi_\sigma \col G \to Aut(N)$ induced by $\sigma$ has finite image in $\Aut(N)$. 
\end{itemize}
\end{defn}
Here $\phi_\sigma \col G \to \Aut(N)$ denotes the set-theoretic map $\phi_\sigma \col g \mapsto \phi_\sigma(g)$ with
$$
^{\phi_\sigma(g)} n = \iota^{-1}(\sigma(g) \iota(n) \sigma(g)^{-1}).
$$
We stress that $\phi_\sigma$ is in general not a homomorphism. See Remark \ref{rmk:convention:conjugation and acting by automorphisms} for the notation.
Condition $(ii)$ may seem artificial but is both natural and necessary; see Remark \ref{rmk:why bounded extensions make sense}.
We denote the set of all bounded extensions of a group $G$ by $N$ which induce $\psi$ by $\Ecl_b(G, N, \psi)$ and mention that this is a subset of $\Ecl(G,N,\psi)$.

Analogously to Theorem \ref{thm:classical group extensions} we will characterise the set $\Ecl_b(G, N, \psi) \subset \Ecl(G, N, \psi)$ using \emph{bounded} cohomology.

\begin{theorem} \label{thm:main}
 Let $G$ and $N$ be groups and suppose that $Z = Z(N)$, the centre of $N$, is
equipped with a norm $\| \cdot \|$ such that $(Z, \| \cdot \|)$ has finite balls. 
Furthermore, let $\psi \col G \to \Out(N)$ be a homomorphism with finite image.
 
 There is a class $\omega_b = \omega_b(G,N,\psi) \in \Hrm_b^3(G,Z)$ such that $\omega_b=0$ in  $\Hrm_b^3(G,Z)$ if and only if
 $\mathcal{E}_b(G,N,\psi) \not = \emptyset$ and $c^3(\omega_b) = \omega$ is the obstruction of Theorem \ref{thm:classical group extensions}.
 If $\mathcal{E}_b(G,N,\psi) \not = \emptyset$, 
then the bijection between the sets  $\Hrm^2(G,Z)$ and $\mathcal{E}(G,N,\psi)$ described in Theorem \ref{thm:classical group extensions} restricts to a bijection between 
 $im(c^2) \subset \Hrm^2(G,Z)$  and $\mathcal{E}_b(G,N,\psi) \subset \Ecl(G, N, \psi)$.
\end{theorem}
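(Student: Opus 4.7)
The plan is to carry out the classical construction behind Theorem \ref{thm:classical group extensions} while arranging the auxiliary data so that all intermediate cochains have bounded---in fact finite---image in $N$ and $Z$, then to exploit the hypothesis that balls in $Z$ are finite to convert boundedness to finiteness whenever a bounded cohomology class vanishes. For Step~1 (construction of $\omega_b$): because $\psi \col G \to \Out(N)$ has finite image, I lift $\psi$ to a set-theoretic map $\phi \col G \to \Aut(N)$ with finite image (lift each element of $\psi(G)$, then pull back along $\psi$). For each pair $(g,h)$ the element $\phi(g)\phi(h)\phi(gh)^{-1}$ is inner, equal to $\Inn_{f(g,h)}$ for some $f(g,h)\in N$ well defined modulo $Z$; since $\{\phi(g)\phi(h)\phi(gh)^{-1}\}$ is finite in $\Inn(N)=N/Z$, I choose $f\col G^2\to N$ with finite image. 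Associativity of the triple product $\phi(g)\phi(h)\phi(k)$ yields the familiar $3$-cocycle
\[
z(g,h,k) = f(g,hk)^{-1}\,\phi(g)(f(h,k))^{-1}\,f(g,h)\,f(gh,k) \in Z,
\]
which takes only finitely many values. The finite-balls hypothesis then makes $z$ a bounded $3$-cocycle; set $\omega_b := [z] \in \Hrm_b^3(G,Z)$. Well-definedness with respect to the choices $(\phi,f)$ follows from the classical calculation once one checks that the interpolating $2$-cochains can again be taken with finite image. By construction $c^3(\omega_b) = \omega$, since both are represented by the same cocycle $z$.

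For Step~2 ($\omega_b = 0 \Leftrightarrow \mathcal{E}_b(G,N,\psi)\neq\emptyset$): if $E$ is a bounded extension with quasihomomorphism section $\sigma$, take $\phi := \phi_\sigma$ (finite image by Definition \ref{defn:bounded extension}) and $f := f_\sigma$ given by $\sigma(g)\sigma(h) = f_\sigma(g,h)\sigma(gh)$, which lands in the finite set $D(\sigma)$; the associated $3$-cocycle then vanishes pointwise. Conversely, if $z = \partial\beta$ for some bounded $2$-cochain $\beta\col G^2 \to Z$, then $\beta$ has bounded image and hence finite image by the finite-balls hypothesis. Replace $f$ by $f' := f\cdot\beta^{-1}$, which now satisfies the genuine twisted $2$-cocycle condition while still having finite image. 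The standard extension $E := N\times G$ with multiplication $(n_1,g_1)(n_2,g_2) := (n_1\phi(g_1)(n_2)f'(g_1,g_2), g_1g_2)$ admits the section $g\mapsto(1,g)$, which is a quasihomomorphism (as $D(\sigma) = \im(f')\times\{1\}$ is finite) and induces $\phi$ with finite image; hence $E \in \mathcal{E}_b(G,N,\psi)$.

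For Step~3 (the bijection restricts): fix a bounded base $E_0 \in \mathcal{E}_b(G,N,\psi)$ with section $\sigma_0$, lift $\phi_0$ of finite image, and cocycle $f_0$ of finite image. After normalising any other extension's section so that it induces the same lift $\phi_0$, the classical bijection sends it to the class $[\xi] \in \Hrm^2(G,Z)$ with cocycle $f_0\cdot\xi$. This extension is bounded iff it admits an equivalent representation (over the same lift $\phi_0$) whose cocycle has finite image in $N$; since $f_0$ already does, this reduces to asking that $\xi$ be cohomologous modulo a $Z$-coboundary to a bounded $2$-cocycle in $Z$---equivalently, by finite balls, to a $2$-cocycle of finite image---which is precisely the condition $[\xi] \in \im(c^2)$.

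The main obstacle is the simultaneous bookkeeping of three notions of smallness---finite image in $\Aut(N)$ for $\phi$, finite image in $N$ for $f$ and for $f_\sigma$, and bounded image in $Z$ for $z$ and $\beta$---across all three steps. The finite-balls hypothesis bridges ``bounded'' and ``finite'' inside $Z$; without it neither Step~2 nor Step~3 would go through. The most intricate point is making the Step~3 bijection restrict cleanly in \emph{both} directions: one must argue that every bounded extension can be represented, using the \emph{same} finite-image lift $\phi_0$ employed for $E_0$, by a cocycle of the form $f_0\cdot\xi$ with $\xi$ cohomologous to a bounded cocycle, and this requires that the change-of-section cochain lie in $Z$ and preserve the finite-image property of $\phi_0$.
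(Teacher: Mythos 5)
Your proposal follows the same route as the paper: lift $\psi$ to a finite-image $\phi$, choose a finite-image $\zeta$ (your $f$) realising each $\phi(g)\phi(h)\phi(gh)^{-1}$ as an inner automorphism, form the resulting $Z$-valued $3$-cocycle, verify well-definedness, and then use the finite-balls hypothesis to pass between bounded and finite-image $2$-cochains in both the vanishing criterion and the bijection. One small misattribution: in Step~1 the boundedness of $z$ does \emph{not} require the finite-balls hypothesis --- $z$ already has finite image because $\phi$ and $f$ do; finite balls is only needed later, to upgrade a bounded $\beta$ (or $\xi$) from ``bounded norm'' to ``finite image''. The rest matches the paper's proof (its Claims 3.8--3.12) essentially claim for claim.
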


Here, $c^n \col \Hrm^n_b(G, Z) \to \Hrm^n(G, Z)$ denotes the \emph{comparison map}; see Subsection \ref{subsec:bounded cohomology}.
We say that a normed group or module $(Z, \| \cdot \|)$ has finite balls if for every $K > 0$ the set $\{ z \in Z \mid \| z \| \leq K \}$ is finite. 
Theorem \ref{thm:main} is applied to examples in Subsection \ref{subsec:examples}.

Just as in Theorem \ref{thm:classical obstructions}
we may ask which elements of $\Hrm^3_b(G, Z)$ may be realised by obstructions.
For a $G$-module $Z$ we define the following subset of $\Hrm^3_b(G,Z)$:
\[
 \mathcal{F}(G,Z) := \{ \Phi^*\alpha \in \Hrm^3_b(G,Z) \mid \Phi \col G \to M \text{ is a homomorphism}, M \text{ a finite group}, \alpha \in \Hrm_b^3(M,Z) \}
\]
where $\Phi^* \alpha$ denotes the pullback of $\alpha$ via the homomorphism $\Phi$. As $M$ is finite, $\Hrm^3(M,Z) = \Hrm^3_b(M,Z)$.
Analogously to Theorem \ref{thm:classical obstructions} we will show:
\begin{theorem} \label{theorem:obstructions}
Let $G$ be a group, let $Z$ be a normed $G$-module with finite balls and such that $G$ acts on $Z$ via finitely many automorphisms. Then
\[
\{ \omega_b(G, N, \psi) \in \Hrm^3_b(G, Z) \mid Z = Z(N) \text{ and } \psi \text{ induces the action on } G \} = \mathcal{F}(G,Z)
\]
as subsets of $\Hrm^3_b(G,Z)$.
\end{theorem}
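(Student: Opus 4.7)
The plan is to prove both inclusions by using the hypothesis that the relevant homomorphisms $\psi\col G \to \Out(N)$ have finite image, combined with two facts: (a) the obstruction $\omega_b$ is natural under group homomorphisms in the $G$-variable, and (b) for a finite group $M$, the comparison map $c^3\col \Hrm^3_b(M,Z) \to \Hrm^3(M,Z)$ is an isomorphism. Both inclusions will then reduce to a chase through a finite quotient of $G$.

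For the inclusion $\{\omega_b(G,N,\psi)\} \subseteq \mathcal{F}(G,Z)$, I would take any admissible triple $(G,N,\psi)$ as in Theorem~\ref{thm:main} and set $M := \psi(G) \subseteq \Out(N)$, which is finite by hypothesis. Factor $\psi = \iota \circ \Phi$ with $\Phi\col G \twoheadrightarrow M$ and $\iota\col M \hookrightarrow \Out(N)$. The module $Z = Z(N)$ is automatically a normed $M$-module with finite balls, so $\omega_b(M,N,\iota) \in \Hrm^3_b(M,Z)$ is defined by Theorem~\ref{thm:main} applied to $(M,N,\iota)$. By the naturality of the construction in Theorem~\ref{thm:main}, one has $\omega_b(G,N,\psi) = \Phi^*\omega_b(M,N,\iota)$, which exhibits $\omega_b(G,N,\psi)$ as an element of $\mathcal{F}(G,Z)$.

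For the reverse inclusion $\mathcal{F}(G,Z) \subseteq \{\omega_b(G,N,\psi)\}$, let $\Phi\col G \to M$ be a homomorphism to a finite group and let $\alpha \in \Hrm^3_b(M,Z) = \Hrm^3(M,Z)$. By Theorem~\ref{thm:classical obstructions} applied to the finite group $M$ acting on $Z$ (where the $M$-action is the one through which the $G$-action on $Z$ factors via $\Phi$), there exists a group $N$ with $Z(N) = Z$ and a homomorphism $\psi'\col M \to \Out(N)$ extending the $M$-action on $Z$ such that $\omega(M,N,\psi') = \alpha$ in ordinary cohomology. Since $M$ is finite, the isomorphism $c^3$ lifts this to $\omega_b(M,N,\psi') = \alpha$ in $\Hrm^3_b(M,Z)$. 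Setting $\psi := \psi' \circ \Phi \col G \to \Out(N)$ produces a homomorphism with finite image, and naturality again gives $\omega_b(G,N,\psi) = \Phi^*\omega_b(M,N,\psi') = \Phi^*\alpha$, as required.

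The main obstacle is justifying the naturality statement $\omega_b(G,N,\psi' \circ \Phi) = \Phi^*\omega_b(M,N,\psi')$. The ordinary analogue is standard, but in the bounded setting one must verify it at the cocycle level using the explicit construction in Theorem~\ref{thm:main}: if $\sigma'\col M \to \Aut(N)$ (or into some set-theoretic lift) is the quasihomomorphism used to define the defect $3$-cocycle for $(M,N,\psi')$, then $\sigma := \sigma' \circ \Phi$ is a quasihomomorphism for $(G,N,\psi)$ with the same finite image property, and its defect $3$-cocycle is literally the $\Phi$-pullback of the one for $M$. A second minor point to check is that the normed structures and finite-ball hypothesis transport correctly; this is immediate because the $M$-module structure on $Z$ already has finite balls and the $G$-action factors through $M$, so no norm adjustments are needed.
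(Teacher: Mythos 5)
Your proof is correct, and the overall strategy (reduce to a finite quotient of $G$ via naturality of the obstruction class) is the same one the paper uses. The notable difference lies in the backward inclusion $\mathcal{F}(G,Z)\subseteq \mathcal{O}_b(G,Z)$: you treat Theorem~\ref{thm:classical obstructions} as a black box, produce $N$ and $\psi'\col M\to\Out(N)$ with $Z(N)=Z$ and $\omega(M,N,\psi')$ equal to (the image under $c^3$ of) $\alpha$, and then use that $c^3\col\Hrm^3_b(M,Z)\to\Hrm^3(M,Z)$ is the identity for $M$ finite to upgrade this to $\omega_b(M,N,\psi')=\alpha$. The paper instead re-derives MacLane's explicit construction of $N$ as $Z\times F$ with $F$ free on symbols $\langle g,h\rangle$, and has to handle the edge case $M\cong\Z_2$ separately because, in that construction, the centre of $N$ is strictly larger than $Z$. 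Your approach is more modular and avoids the $\Z_2$ fix, but it implicitly trusts that the $N$ asserted by Theorem~\ref{thm:classical obstructions} genuinely satisfies $Z(N)=Z$ rather than just containing $Z$ centrally; that is precisely the point the paper is being careful about by spelling out the construction. You also correctly identify that the whole argument hinges on the naturality $\omega_b(G,N,\psi'\circ\Phi)=\Phi^*\omega_b(M,N,\psi')$, which the paper leaves implicit in both directions but which does hold at the cocycle level by pulling back the choices of lift $\phi'$ and $\zeta'$ along $\Phi$. One small imprecision in your write-up: the data used to build the obstruction $3$-cocycle in the proof of Theorem~\ref{thm:main} is a set-theoretic lift $\phi\col G\to\Aut(N)$ together with $\zeta\col G\times G\to N$, not a quasihomomorphism into $\Aut(N)$; the verification you sketch is otherwise right.
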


As finite groups are amenable this shows that all such classes in $\Hrm^3_b(G,Z)$ will vanish under a change to real coefficients; see Subsection \ref{subsec:bounded cohomology}.
We prove Theorem \ref{thm:main} and \ref{theorem:obstructions} following the outline of the classical proofs in \cite{brown}.

\subsection{Organisation of the paper}
This paper is organised as follows:
In Section \ref{sec:preliminaries} we recall well-known facts of (bounded) cohomology and quasihomomorphisms.
 In Section \ref{sec:proof of main theorem} we will reformulate the problem of characterising group extensions using \emph{non-abelian cocycles}; see Definition \ref{defn:non-abelian cocycle}. Using this characterisation, we will prove Theorem \ref{thm:main} in Subsection \ref{subsec:proof main thm}.
In Section \ref{sec:examples and obstructions} we prove Theorem \ref{theorem:obstructions} which characterises the set of classes arising as obstructions $\omega_b$. In Section \ref{sec:exmp and general} we give examples to show that the assumptions of Theorem \ref{thm:main} are necessary and discuss generalisations.
The proof of Proposition \ref{prop:equivalent qhm} is postponed to the Appendix in Section \ref{sec:appendix-equivalent def of qhm}.

\section{Preliminaries} \label{sec:preliminaries}

In this section we recall notation and conventions regarding the (outer) automorphisms in Subsection \ref{subsec:aut and out}.
We further recall basic facts on (bounded) cohomology of groups in Subsection \ref{subsec:bounded cohomology} and quasihomomorphisms by Fujiwara--Kapovich in Subsection \ref{subsec:quasimorph and quasihomomorph}.

\subsection{Notation and conventions, $\Aut$ and $\Out$} \label{subsec:aut and out}
Throughout this paper, Roman capitals ($A$, $B$) denote groups, lowercase Roman letters ($a,b$) denote group elements and greek letters ($\alpha, \beta$) denote functions.
We stick to this notation unless it is mathematical convention to do otherwise.
In a group $G$ the identity will be denoted by $1 \in G$ and by $0 \in G$ to stress that $G$ is abelian.
The trivial group will also be denoted by ``$1$''.

Let $N$ be a group and let $\Aut(N)$ be the group of automorphisms of $N$.
Recall that $\Inn(N)$ denotes the group of \emph{inner automorphisms}. This is, the subgroup of $\Aut(N)$ whose elements are induced by conjugations of elements in $N$. There is a map $\phi \col N \to \Inn(N)$ 
via $\phi \col n \to \phi_n$
 where $\phi_n\col g \mapsto n g n^{-1}$.
Recall that $\Inn(N)$ is a normal subgroup of $\Aut(N)$ and that the quotient $\Out(N) = \Aut(N) / \Inn(N)$ is the \emph{group of outer automorphisms of $N$}.
It is well-known that there is an exact sequence
\[
1 \to Z \to N \to \Inn(N) \to \Aut(N) \to \Out(N) \to 1
\]
where $Z = Z(N)$ denotes the centre of $N$ and all the maps are the obvious ones. 
We will frequently use the following facts. 
Let $G$ be a group.
Any homomorphism $\psi \col G \to \Out(N)$ induces an action on $Z = Z(N)$. This fact is also proved in detail in Subsection \ref{subsec:Basic properties Group Extensions}.
Moreover, if $n_1, n_2 \in N$ are two elements such that for every $g \in N$,
$\phi_{n_1} (g) = \phi_{n_2} (g)$ then $n_1$ and $n_2$ just differ by an element in the centre, i.e.\ there is $z \in Z(N)$ such that $n_1 = z n_2$. This may be seen by the exactness of the above sequence.

\subsection{(Bounded) cohomology of groups} \label{subsec:bounded cohomology}

For what follows we will define (bounded) cohomology using the \emph{inhomogeneous} resolution.
Let $G$ be a group and let $V$ be a $\Z G$-module. In what follows we may refer to a $\Z G$-module simply as $G$-module.
Following \cite{frigerio}, a \emph{norm} on a $G$-module $V$ is a map $\| \cdot \| \col V \to \R^+$ such that
\begin{itemize}
\item $\| v \| = 0$ if and only if $v = 0$
\item $\| r v \| \leq | r | \| v \|$ for every $r \in \Z$, $v \in V$
\item $\| v + w \| \leq \| v \| + \| w \|$
\item $\| g v \| = \| v \|$ for every $g \in G$, $v \in V$.
\end{itemize}
Suppose that the $G$-module $V$ is equipped with a norm $\| \cdot \|$.
Set $C^0(G, V) = C^0_b(G, V) = V$ and set for $n \geq 1$, $C^n(G, V) = \{ \alpha \col G^n \to V \}$.
For an element $\alpha \in C^n(G,V)$ we define
$\| \alpha \| = \sup_{(g_1, \ldots, g_n) \in G^n} \| \alpha(g_1, \ldots, g_n) \|$ when the supremum exists and set $\| \alpha \| = \infty$, else.
For $n \geq 1$ set $C^n_b(G, V) = \{ \alpha \in C^n(G,V) \mid \| \alpha \| < \infty \}$, the \emph{bounded chains}.

We define $\delta^n \col C^n(G,V) \to C^{n+1}(G,V)$, the \emph{coboundary operator}, as follows:
Set $\delta^0 \col C^0(G,V) \to C^1(G,V)$ via $\delta^0(v) \col g_1 \mapsto g_1 \cdot v - v$
and for $n \geq 1$ define $\delta^n \col C^n(G,V) \to C^{n+1}(G,V)$ via
\begin{equ*}{rCl}
 \delta^n(\alpha) \col (g_1, \ldots ,g_{n+1}) &\mapsto &g_1 \cdot \alpha(g_2,\ldots,g_{n+1}) \\
 & &+\sum_{i=1}^n(-1)^i \alpha(g_1,\ldots,g_i g_{i+1}, \ldots, g_{n+1}) \\
 & &+(-1)^{n+1} \alpha(g_1,\ldots, g_n).
\end{equ*}
Note that $\delta^n$ restricts to a map $C_b^n(G,V) \to C_b^{n+1}(G,V)$ for any $n \geq 0$. By abuse of notation we also call this restriction $\delta^n$ as well.

It is well-known that $(C^*(G,V),\delta^*)$ is a cochain complex. The \emph{cohomology of $G$ with coefficients in $V$} is the homology of this complex and denoted by $\Hrm^*(G,V)$.
Similarly $(C_b^*(G,V),\delta^*)$ 
is a cochain complex and its homology is the \emph{bounded cohomology of $G$ with coefficients in $V$} and denoted by $\Hrm^*_b(G,V)$.

Let $W$ be a normed $H$-module and let $\Phi \col G \to H$ be a homomorphism. Denote by $V$ the normed abelian group $W$ equipped with $G$-module structure induced by $\Phi$.
We then obtain a map $\Phi^* \col \Hrm^*(H, W) \to \Hrm^*(G,V)$ via $\Phi^* \col \alpha \mapsto \Phi^* \alpha$ where $\Phi^* \alpha$ denotes the pullback of $\alpha$ via $\Phi$. Similarly we obtain a map
$\Phi^* \col \Hrm_b^*(H, W) \to \Hrm_b^*(G,V)$.

For what follows it will be helpful to work with \emph{non-degenerate} chains. A map $\alpha \in C^n(G,V)$ is called non-degenerate if $\alpha(g_1, \ldots, g_n) = 0$ whenever $g_i = 1$ for some $i=1, \ldots, n$.
We define $NC^0(G,V) = NC^0_b(G,V) = V$ and moreover $NC^n(G, V) = \{ \alpha \in C^n(G, V) \mid \alpha \text{ non-degenerate} \}$ and $NC^n_b(G,V) = \{ \alpha \in C_b^n(G, V) \mid \alpha \text{ non-degenerate} \}$ and observe that $\delta^*$ sends non-degenerate maps to non-degenerate maps.

\begin{prop} \label{prop:non-degenerate cocycles}
The homology of $(NC^*(G,V), \delta^*)$ is $\Hrm^n(G,V)$ and the homology of $(NC^*_b(G,V), \delta^*)$ is $\Hrm_b^n(G,V)$.
\end{prop}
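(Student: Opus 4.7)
The plan is to show that the inclusion $\iota \colon NC^*(G, V) \hookrightarrow C^*(G, V)$ is a cochain-homotopy equivalence whose constituent maps preserve boundedness, so that the bounded statement follows at once from the unbounded one. Concretely, I would exhibit a cochain projection $p \colon C^*(G, V) \twoheadrightarrow NC^*(G, V)$ splitting $\iota$, together with a cochain homotopy $h^n \colon C^n(G, V) \to C^{n-1}(G, V)$ satisfying
\[
\mathrm{id} - \iota \circ p \;=\; \delta \circ h + h \circ \delta.
\]

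As a preliminary step, I would verify that $NC^*$ is indeed a subcomplex, i.e.\ $\delta(NC^n) \subseteq NC^{n+1}$. Given $\alpha \in NC^n$ and $(g_1, \ldots, g_{n+1})$ with $g_i = 1$, expanding $\delta \alpha(g_1, \ldots, g_{n+1})$ using the explicit formula shows that every summand evaluates $\alpha$ on a tuple still containing a $1$-entry and hence vanishes, \emph{except} for the two summands coming from the contractions $g_{i-1} g_i$ (index $j = i - 1$) and $g_i g_{i+1}$ (index $j = i$). These two summands evaluate $\alpha$ on the same contracted tuple but carry opposite signs $(-1)^{i-1}$ and $(-1)^i$, so they cancel. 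The boundary cases $i = 1$ and $i = n+1$ are handled analogously, this time using the leading term $g_1 \cdot \alpha(g_2, \ldots, g_{n+1})$ and the trailing term $(-1)^{n+1} \alpha(g_1, \ldots, g_n)$ respectively.

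For the construction of $p$ and $h$, I would follow the classical normalization theorem for the inhomogeneous bar resolution (see e.g.\ Mac Lane's \emph{Homology}). The building blocks are the operators $s_i \colon C^n(G, V) \to C^{n-1}(G, V)$ defined by inserting the identity in the $i$-th slot,
\[
(s_i \alpha)(g_1, \ldots, g_{n-1}) \;=\; \alpha(g_1, \ldots, g_{i-1}, 1, g_i, \ldots, g_{n-1}),
\]
and both $p$ and $h$ are then assembled as suitable signed sums of compositions of the $s_i$ with $\iota$; the identity $\mathrm{id} - \iota \circ p = \delta h + h \delta$ follows from the simplicial identities linking the $s_i$ to the face terms implicit in $\delta$. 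I expect this combinatorial verification to be the main, and only real, obstacle; once it is in place, boundedness is automatic. Indeed, every term in $p \alpha$ and $h \alpha$ is a single value of the form $\pm \alpha(g_1', \ldots, g_k')$, so $\| p \alpha \|, \| h \alpha \| \leq C_n \| \alpha \|$ for a constant $C_n$ depending only on $n$; hence $p$ and $h$ restrict to $NC^*_b$ and $C^*_b$, yielding the bounded statement alongside the unbounded one.
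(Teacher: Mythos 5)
Your proposal is correct and takes essentially the same approach as the paper: the paper's proof is a citation to Mac Lane's explicit normalization homotopy between $(NC^*(G,V),\delta^*)$ and $(C^*(G,V),\delta^*)$, followed by the observation that the homotopy preserves bounded cochains. You are simply spelling out the construction (the degeneracy operators $s_i$, the projection $p$, the homotopy $h$) that the paper delegates to the reference, and drawing the same conclusion about boundedness.
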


\begin{proof}
See Section 6 of \cite{maclane}, where an explicit homotopy between the complexes $(NC^*(G,V), \delta^*)$ and $(C^*(G,V), \delta^*)$ is constructed. Moreover, one may see that this homotopy preserves bounded maps and hence yields a homotopy between $(NC_b^*(G,V), \delta^*)$ and $(C_b^*(G,V), \delta^*)$.
\end{proof}

Note that the inclusion $C^*_b(G,V) \hookrightarrow C^*(G,V)$ commutes with the coboundary operator and hence induces a well defined map $c^* \col \Hrm^*_b(G,V) \to \Hrm^*(G,V)$, called the \textit{comparison map}.
For a thorough treatment of ordinary and bounded cohomology see \cite{brown} and \cite{frigerio} respectively.

\subsection{Quasimorphisms and quasihomomorphisms} \label{subsec:quasimorph and quasihomomorph}

Consider $\ker(c^2)$ the kernel of the comparison map in dimension $2$. An element $[\omega] \in \ker(c^2)$ is a class of a bounded function $\omega \in C^2_b(G, V)$ which vanishes in ordinary cohomology, i.e.\ such that there is a map, $\phi \col G \to V$ such that for all $g_1,g_2 \in G$, $\omega(g_1,g_2) = \delta^1 \phi(g_1,g_2) = g_1 \cdot \phi(g_2) - \phi(g_1 g_1) + \phi(g_2)$.

If $V = \Z$  is equipped with the trivial $G$-module structure and the standard norm on $\Z$ then we see that every class $[\omega] \in \ker(c^2)$ may be represented by the coboundary of a map $\phi \col G \to \Z$ such that there is a $D > 0$ such that for all $g, h \in G$ $|\phi(g) + \phi(h) - \phi(gh)| \leq D$.
We call such maps \emph{quasimorphisms}.
Maps $\psi \col G \to \Z$ such that there is a constant $D' > 0$ and a homomorphisms $\eta \col G \to \R$ such that for all $g \in G$, $| \phi(g) - \eta(g) | \leq D'$ are called \emph{trivial quasimorphisms}.
Many classes of ``non-positively curved groups'' support non-trivial quasimorphisms. For example acylindrically hyperbolic groups support non-trivial quasimorphisms; see \cite{hull_osin}. On the other hand, amenable groups do not support non-trivial quasimorphisms.
 For a thorough treatment of quasimorphisms see \cite{calegari}.

There are different proposals of how to generalise quasimorphisms $\phi \col G \to \Z$ to maps with an arbitrary group as a target $\phi \col G \to H$.
This paper exclusively treats the generalisation of Fujiwara--Kapovich (\cite{fk}). However, we note that there are other generalisations,
for example one by Hartnick--Schweitzer (\cite{hs}). The latter are considerably more general than the one we are concerned with; see Subsection \ref{subsec:generalisations}.

\begin{defn} \label{def:fk-qm}
 (Fujiwara--Kapovich \cite{fk}) Let $G$ and $H$ be groups and let $\sigma \col G \to H$ be a set-theoretic map. Define $\drm \col G \times G \to H$ via $\drm \col (g,h) \mapsto \sigma(g) \sigma(h) \sigma(gh)^{-1}$ and
define $D(\sigma) \subset H$, the \emph{defect of $\sigma$} via
  \[
  D(\sigma) = \{\drm(g,h) | g,h \in G \} = \{ \sigma(g) \sigma(h) \sigma(gh)^{-1} \mid g,h \in G \}.
 \]
The group 
 $\Delta(\sigma) < H$ generated by $D(\sigma)$ is called the \textit{defect group}.
The map $\sigma \col G \to H$ is called \textit{quasihomomorphism} if the defect $D(\sigma) \subset H$ is finite.
\end{defn}

When there is no danger of ambiguity we will write $D=D(\sigma)$ and $\Delta = \Delta(\sigma)$.
This definition is slightly different from the original definition in \cite{fk}.
Here, the authors required that the set
\[
\bar{D}(\sigma) = \{\sigma(h)^{-1} \sigma(g)^{-1} \sigma(gh) \mid g,h \in G \}
\]
is finite. However, those two definitions may be seen to be equivalent:
\begin{prop} \label{prop:equivalent qhm}
Let $G, H$ be groups and let $\sigma \col G \to H$ be a set-theoretic map. Then $\sigma$ is a quasihomomorphism in the sense of Definition \ref{def:fk-qm} if and only if it is a quasihomomorphism in the sense of Fujiwara--Kapovich (\cite{fk}) i.e.\ if and only if $\bar{D}(\sigma)$ is finite.
\end{prop}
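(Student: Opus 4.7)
The plan is to exploit a single conjugation identity relating $\bar{\drm}(g,h)$ to $\drm(g,h)^{-1}$, and then upgrade that pointwise identity into a uniform finiteness statement via a bounded normalisation property of $\sigma(G)$ on the defect.

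First I would observe the key identity. From $\sigma(g)\sigma(h) = \drm(g,h)\sigma(gh)$ and $\bar{\drm}(g,h) = \sigma(h)^{-1}\sigma(g)^{-1}\sigma(gh)$, a short computation (multiply the latter on the left by $\drm(g,h)\sigma(gh)$ and observe that both products equal $\sigma(gh)$) yields
\[
\bar{\drm}(g,h) \;=\; \sigma(gh)^{-1}\,\drm(g,h)^{-1}\,\sigma(gh).
\]
Thus every element of $\bar{D}(\sigma)$ is a conjugate by some $\sigma(k)$ of an element of $D(\sigma)^{-1}$, and proving ``$D$ finite implies $\bar{D}$ finite'' reduces to showing that the full conjugation set $\bigcup_{k\in G}\sigma(k)^{-1}\,D^{-1}\,\sigma(k)$ is finite.

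Next I would establish a uniform bound on these conjugations. Iterating the identity $\sigma(a)\sigma(b)=\drm(a,b)\sigma(ab)$ twice and telescoping gives
\[
\sigma(k)\,\drm(g_1,g_2)\,\sigma(k)^{-1} \;=\; \drm(k,g_1)\,\drm(kg_1,g_2)\,\drm(k,g_1g_2)^{-1},
\]
so $\sigma(k)\,D\,\sigma(k)^{-1} \subseteq D\cdot D\cdot D^{-1}$, a fixed finite set independent of $k$. To pass to the reversed conjugation $\sigma(k)^{-1}(\,\cdot\,)\sigma(k)$, I would use that $\sigma(k^{-1})\sigma(k) = \drm(k^{-1},k)\sigma(1)\in D\cdot\{\sigma(1)\}$ is finite, so $\sigma(k)^{-1}$ equals an element of a fixed finite set times $\sigma(k^{-1})$. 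Applying the previous bound with $k$ replaced by $k^{-1}$ and combining, I obtain a finite $F\subset H$ with $\sigma(k)^{-1}\,D^{-1}\,\sigma(k)\subseteq F$ for every $k\in G$, whence $\bar{D}(\sigma)\subseteq F$.

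The converse direction is entirely symmetric: the same manipulation yields the dual identity $\drm(g,h) = \sigma(gh)\,\bar{\drm}(g,h)^{-1}\,\sigma(gh)^{-1}$, and an analogous expansion of $\sigma(k)\,\bar{\drm}(g_1,g_2)\,\sigma(k)^{-1}$ as a product of three $\bar{\drm}$-terms yields the required uniform bound. The main technical obstacle I foresee is the bookkeeping in this dual computation, since the order of arguments in $\bar{\drm}$ is reversed compared to $\drm$, so one must use the mirror form of the telescoping identity; once the orders are aligned the argument is identical in spirit to the forward direction.
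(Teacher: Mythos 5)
Your proposal is correct and follows essentially the same route as the paper's proof in Section \ref{sec:appendix-equivalent def of qhm}: both use (a) the cocycle rearrangement $\sigma(k)\,\drm(g_1,g_2)\,\sigma(k)^{-1}=\drm(k,g_1)\drm(kg_1,g_2)\drm(k,g_1g_2)^{-1}$ to bound conjugates of the defect uniformly (packaged in the paper as Claim \ref{claim:property_finite_conj} via the finite image in $\Aut(\Delta)$ of Proposition \ref{prop:basic properties of quasihomomorphisms}), and (b) the trick that $\sigma(k)^{-1}$ lies in a fixed finite set times $\sigma(k^{-1})$, derived from $\drm(k^{-1},k)$. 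Your opening identity $\bar{\drm}(g,h)=\sigma(gh)^{-1}\drm(g,h)^{-1}\sigma(gh)$ is a tidier organizing principle than the paper's in-place manipulation of $\sigma(g)^{-1}\in\sigma(g^{-1})D_0$, but it is the same mechanism underneath. One small correction for the converse: the natural telescoping identity for the reversed defect is for $\sigma(k)^{-1}\bar{\drm}(g_1,g_2)\sigma(k)=\bar{\drm}(g_2,k)\bar{\drm}(g_1,g_2k)\bar{\drm}(g_1g_2,k)^{-1}$, not for $\sigma(k)\bar{\drm}(g_1,g_2)\sigma(k)^{-1}$ as you wrote; since you must bound $\bigcup_k\sigma(k)\bar{D}^{-1}\sigma(k)^{-1}$, you then need, dually to the forward direction, to replace $\sigma(k)$ by $(\text{finite})\cdot\sigma(k^{-1})^{-1}$ and apply the mirror identity with $k^{-1}$. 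You already flag this bookkeeping, so it is a notational slip rather than a gap.
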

\begin{proof}
We postpone the proof to the Appendix; see Section \ref{sec:appendix-equivalent def of qhm}.
\end{proof}
We use Definition \ref{def:fk-qm} as it is more natural in the context of group extensions.
Every set theoretic map $\sigma \col G \to H$ with finite image and every homomorphism are quasihomomorphisms for ``trivial'' reasons.
We may also construct different quasihomomorphisms using quasimorphisms $\phi \col G \to \Z$:
Let $C < H$ be an infinite cyclic subgroup and let $\tau \col \Z \to H$ be a homomorphism s.t. $\tau(\Z) = C$. Then it is easy to check that for every quasimorphism $\phi \col G \to \Z$, $\tau \circ \phi \col G \to H$ is a quasihomomorphism.

Fujiwara--Kapovich showed that if the target $H$ is a torsion-free hyperbolic group then the above mentioned maps are the only possible quasihomomorphisms.
To be precise in this case every quasihomomorphism $\sigma \col G \to H$ has either finite image, is a homomorphism, or maps to a cyclic subgroup of $H$; see Theorem 4.1 of \cite{fk}.

We recall basic properties of quasihomomorphisms.
For what follows we use the following convention.
\begin{rmk} \label{rmk:convention:conjugation and acting by automorphisms}
If $\alpha \in \Aut(G)$ and $g \in G$ then $^\alpha g$ denotes the element $\alpha(g) \in G$.
If $a \in G$ is an element then $^a g$ denotes conjugation by $a$, i.e.\ the element $a g a^{-1} \in G$. Sometimes we successively apply automorphisms and conjugations. For example, $^a \text{} ^{\alpha} g$ denotes the element $a \alpha(g) a^{-1} \in G$.
\end{rmk}

\begin{prop} \label{prop:basic properties of quasihomomorphisms}
 Let $\sigma \col G \to H$ be a quasihomomorphism, let $D$ and $\Delta$ be as above and let $H_0<H$ be the subgroup of $H$ generated by $\sigma(G)$. Then $\Delta$ is normal in  $H_0$. The function $\phi \col G \to \Aut(\Delta)$
 defined via $\phi(g) \col a \mapsto ^{\sigma(g)} a$ has finite image and its quotient $\psi \col G \to \Out(\Delta)$ is a homomorphism with finite image. Moreover, the pair $(\drm,\phi)$ satisfies
 \[
 ^{\phi(g)} \drm(h,i) \drm(g,hi) = \drm(g,h) \drm(gh,i)
\]
for all $g,h,i \in G$.
 \end{prop}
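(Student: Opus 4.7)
The plan is to extract a single algebraic identity from the definition of $\drm$ and use it to deduce all four assertions at once. Starting from $\sigma(g)\drm(h,i)\sigma(g)^{-1} = \sigma(g)\sigma(h)\sigma(i)\sigma(hi)^{-1}\sigma(g)^{-1}$ and applying the relation $\sigma(x)\sigma(y) = \drm(x,y)\sigma(xy)$ three times to regroup the products, a direct calculation yields, for all $g,h,i \in G$,
\[
\sigma(g)\,\drm(h,i)\,\sigma(g)^{-1} \;=\; \drm(g,h)\,\drm(gh,i)\,\drm(g,hi)^{-1}. \qquad (\ast)
\]
Rearranging $(\ast)$ is precisely the cocycle identity claimed in the proposition, so that final assertion comes for free.

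Next, $(\ast)$ applied both to $g$ and to $g^{-1}$ shows that conjugation by $\sigma(g)$ (respectively $\sigma(g^{-1})$) sends every generator $\drm(h,i) \in D$ of $\Delta$ into $\Delta$, hence $\phi(g)$ and $\phi(g^{-1})$ are at least endomorphisms of $\Delta$. To upgrade them to automorphisms — which simultaneously settles normality of $\Delta$ in $H_0 = \langle \sigma(G)\rangle$ — I would compute
\[
\phi(g^{-1})\phi(g)(a) \;=\; \sigma(g^{-1})\sigma(g)\,a\,\sigma(g)^{-1}\sigma(g^{-1})^{-1} \;=\; {}^{c}a
\]
with $c = \sigma(g^{-1})\sigma(g) = \drm(g^{-1},g)\,\drm(1,1) \in \Delta$; this composition is therefore $\Inn(c)|_\Delta$, a bijection, and by the symmetric computation so is $\phi(g)\phi(g^{-1})$. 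Consequently each $\phi(g)$ lies in $\Aut(\Delta)$ and $\Delta$ is normal in $H_0$.

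For the finite image of $\phi$ I invoke $(\ast)$ once more: it confines $\phi(g)(D)$ inside the fixed finite set $D\cdot D\cdot D^{-1}$, independently of $g$. Since each $\phi(g) \in \Aut(\Delta)$ is determined by its restriction to the finite generating set $D$, there are at most $(|D|^3)^{|D|}$ possibilities, and $\phi(G)$ is finite.

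Finally, to see that $\psi$ is a homomorphism, I compute directly: for $a \in \Delta$,
\[
\phi(g)\phi(h)(a) \;=\; \sigma(g)\sigma(h)\,a\,\sigma(h)^{-1}\sigma(g)^{-1} \;=\; \drm(g,h)\,\phi(gh)(a)\,\drm(g,h)^{-1},
\]
so $\phi(g)\phi(h) = \Inn(\drm(g,h))\circ\phi(gh)$. Since $\drm(g,h)\in\Delta$, this inner automorphism vanishes in $\Out(\Delta)$, giving $\psi(gh)=\psi(g)\psi(h)$; finiteness of $\psi(G)$ is then immediate from that of $\phi(G)$. The only mildly delicate point in the whole argument is the bookkeeping needed to promote $\phi(g)$ from an endomorphism to an honest automorphism of $\Delta$; everything else cascades from the single identity $(\ast)$.
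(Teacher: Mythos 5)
Your proof is correct and follows essentially the same route as the paper's: you extract the key identity $\sigma(g)\,\drm(h,i)\,\sigma(g)^{-1}=\drm(g,h)\,\drm(gh,i)\,\drm(g,hi)^{-1}$, use it to confine $\phi(g)(D)$ inside the finite set $D\cdot D\cdot D^{-1}$ (giving normality of $\Delta$ and finiteness of $\phi(G)$), and compute $\phi(g)\phi(h)=\Inn(\drm(g,h))\circ\phi(gh)$ to see that $\psi$ is a homomorphism. The one thing you do beyond the paper is make explicit that each $\phi(g)$ is a genuine automorphism rather than only an endomorphism of $\Delta$, by noting $\phi(g^{-1})\phi(g)=\Inn(c)|_\Delta$ with $c=\sigma(g^{-1})\sigma(g)=\drm(g^{-1},g)\,\drm(1,1)\in\Delta$; this is a worthwhile piece of care, since it is also what guarantees $\sigma(g)^{-1}\Delta\,\sigma(g)\subseteq\Delta$ and hence honest normality of $\Delta$ in $H_0$, a point the paper leaves implicit.
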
 
Proposition \ref{prop:basic properties of quasihomomorphisms} may be found in Lemma 2.5 of \cite{fk}.
 
 \begin{proof}
For any $g,h,i \in G$ we calculate
\begin{align*}
 \drm(g,h) \drm(gh,i) &= \sigma(g) \sigma(h) \sigma(i) \sigma(ghi)^{-1}  = \sigma(g)\drm(h,i) \sigma(g)^{-1} \drm(g,hi) \\
 &= ^{\phi(g)}\drm(h,i) \drm(g,hi)
\end{align*}
 so $(\drm,\phi)$ satisfies the identity of the proposition.
Rearranging terms we see that
 \[
 ^{\sigma(g)} \drm(h,i) =  \drm(g,h) \drm(gh,i) \drm(g,hi)^{-1}
 \] so $\sigma(g)$ conjugates any $\drm(h,i) \in D$ 
 into the finite set $D \cdot D \cdot D^{-1}$. Here, for two sets $A,B \subset H$, we write $A \cdot B = \{a \cdot b \in H \mid a \in A, b \in B \}$ and $A^{-1}$ denotes the set of inverses of $A$. This shows that $\Delta$ is a normal subgroup of $H_0$, as $D$ generates $\Delta$, and that $\phi \col G \to \Aut(\Delta)$ has finite image. 

To see that the induced map $\psi \col G \to \Out(\Delta)$ is a homomorphism, let
$g,h \in G$ and $a \in \Delta$. Observe that
\begin{align*}
^{\phi(g) \phi(h)} a &= \sigma(g) \sigma(h) a \sigma(h)^{-1} \sigma(g)^{-1} \\
&= ^{\drm(g,h)} \text{}  ^{\sigma(gh)} a
\end{align*}
 and hence $\phi(g) \circ \phi(h)$ and $\phi(gh)$ differ by an inner automorphism. We conclude that $\psi(g) \circ \psi(h) = \psi(gh)$ as elements in $\Out(\Delta)$. So $\psi \col G \to \Out(\Delta)$ is a homomorphism. This shows Proposition \ref{prop:basic properties of quasihomomorphisms}.
 \end{proof}

\begin{rmk} \label{rmk:why bounded extensions make sense}
In light of Proposition \ref{prop:basic properties of quasihomomorphisms} the extra assumption in Theorem \ref{thm:main} that the conjugation by the quasihomomorphism induces a finite image in $\Aut(N)$ is natural: 
Given a short exact sequence $1 \to N \to E \to G \to 1$ that admits a quasihomomorphic section $\sigma: G \to E$ one may see that $1 \to \Delta \to E_0 \to G \to 1$ is a short exact sequence where 
$\Delta = \Delta(\sigma) < N$ and $E_0 = \langle \sigma(G) \rangle < E$ and the map to $\Aut(\Delta)$ has finite image. In fact this assumption is necessary as Example \ref{exmp:aut has finite image necessary} shows.
\end{rmk}

\begin{prop} \label{prop:may assume that sigma one is one}
Let $\sigma \col G \to H$ be a quasihomomorphism. Then the map $\tilde{\sigma} \col G \to H$ defined via
\[
\tilde{\sigma}(g) = \begin{cases}
1 & \text{if } g = 1 \\
\sigma(g) & \text{else}
\end{cases}
\]
is also a quasihomomorphism.
\end{prop}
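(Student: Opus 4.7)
The plan is to bound $D(\tilde{\sigma})$ by a finite subset of $H$ via case analysis, depending on whether each of $g$, $h$, and $gh$ equals $1 \in G$. Write $c = \sigma(1)$, and note that $c = \sigma(1)\sigma(1)\sigma(1)^{-1} \in D(\sigma)$, so $c$ is a fixed element of the finite defect set. The only difference between $\tilde{\sigma}$ and $\sigma$ is that $\tilde{\sigma}(1) = 1$ rather than $c$, so $\tilde{d}(g,h) = \tilde{\sigma}(g)\tilde{\sigma}(h)\tilde{\sigma}(gh)^{-1}$ can be compared to $\drm(g,h)$ and the resulting defect can only pick up finitely many extra elements, all expressible in terms of $c$ and elements of $D(\sigma)$.

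I would then run through the cases. If $g, h, gh$ are all non-identity, $\tilde{d}(g,h) = \drm(g,h) \in D(\sigma)$. If $g = 1$ (and hence $gh = h$), then $\tilde{d}(1,h) = 1 \cdot \tilde{\sigma}(h) \cdot \tilde{\sigma}(h)^{-1} = 1$, and similarly $\tilde{d}(g,1) = 1$; the case $g = h = 1$ also gives $1$. The only non-trivial remaining case is $g, h \neq 1$ but $gh = 1$, i.e.\ $h = g^{-1}$: here $\tilde{d}(g,g^{-1}) = \sigma(g)\sigma(g^{-1})$, while $\drm(g,g^{-1}) = \sigma(g)\sigma(g^{-1})c^{-1}$, so $\tilde{d}(g,g^{-1}) = \drm(g,g^{-1})\cdot c$. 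Assembling the cases,
\[
D(\tilde{\sigma}) \;\subseteq\; D(\sigma) \;\cup\; \{1\} \;\cup\; D(\sigma)\cdot c,
\]
which is finite because $D(\sigma)$ is finite.

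There is no real obstacle; the only mildly subtle case is $h = g^{-1}$, where one has to remember that the ``missing'' factor of $\sigma(1)^{-1} = c^{-1}$ in passing from $\drm$ to $\tilde{d}$ must be put back by right-multiplying by $c$. The whole argument is a short bookkeeping exercise once one isolates $c = \sigma(1)$ as the sole source of discrepancy.
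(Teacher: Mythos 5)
Your proof is correct and follows the same approach as the paper's (a direct estimate of $D(\tilde{\sigma})$ by cases). In fact your case analysis is more careful: the paper simply states $D(\tilde{\sigma}) \subset D(\sigma) \cup \{1\}$, but as your treatment of the case $h = g^{-1}$ with $g \neq 1$ shows, $\tilde{d}(g,g^{-1}) = \sigma(g)\sigma(g^{-1}) = \drm(g,g^{-1})\cdot \sigma(1)$ need not lie in $D(\sigma)$ when $\sigma(1) \neq 1$ (e.g.\ $\sigma \colon \Z \to \Z$, $n \mapsto n+1$, has $D(\sigma) = \{1\}$ but $2 \in D(\tilde{\sigma})$). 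Your corrected inclusion $D(\tilde{\sigma}) \subseteq D(\sigma) \cup \{1\} \cup D(\sigma)\cdot\sigma(1)$ is the accurate one, and of course still yields finiteness, which is all the proposition requires.
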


\begin{proof}
An immediate calculation shows that $D(\tilde{\sigma}) \subset D(\sigma) \cup \{ 1 \}$.
\end{proof}
We will use the last proposition to assume that quasihomomorphic sections of extensions satisfy $\sigma(1) = 1$.

\section{Extensions and proof of Theorem \ref{thm:main}} \label{sec:proof of main theorem}

Recall from the introduction that an extension of a group $G$ by a group $N$ is a short exact sequence
\[
1 \to N \to E \to G \to 1
\]
and that each such extension induces a homomorphism $\psi \col G \to \Out(N)$.
We will recall the construction of such $\psi$ in Subsection \ref{subsec:Basic properties Group Extensions}. 

In Subsection \ref{subsec:group extensions induce non-abelian cocycle} we will define \emph{non-abelian cocycles} (see Definition \ref{defn:non-abelian cocycle}) for group extensions of $G$ by $N$ which induce $\psi$.
Those are certain pairs of functions $(\erm, \phi)$ where $\erm \col G \times G \to N$ and $\phi \col G \to \Aut(N)$.

We will see that every group extension of $G$ by $N$ inducing $\psi$ gives rise to a non-abelian cocycle $(\erm, \phi)$ in Proposition \ref{prop: extension induce non-abelian cocycle}. On the other hand every non-abelian cocycle $(\erm, \phi)$ gives rise to an extension $1 \to N \to  \Erm(\erm, \phi) \to G \to 1$; see Proposition \ref{prop:non-abelian cocycles induce extensions}.
We will use this correspondence to prove Theorem \ref{thm:main} in Subsection \ref{subsec:proof main thm}.
The proof will follow the outline of \cite{brown}, Chapter VI, 6.

\subsection{Group extensions} \label{subsec:Basic properties Group Extensions}

Let $1 \to N \overset{\iota}{\to} E \overset{\pi}{\to} G \to 1$ be an extension of $G$ by $N$ and let $\sigma \col G \to E$ be any set-theoretic section of $\pi \col E \to G$. Then $\sigma \col G \to E$ induces a map $\phi_\sigma \col G \to \Aut(N)$ via $\phi_\sigma(g) \col n \mapsto \iota^{-1}(^{\sigma(g)} \iota(n))$. See Remark \ref{rmk:convention:conjugation and acting by automorphisms} for notation. 
Let $\sigma' \col G \to E$ be another section of $\pi$. For every $g \in G$, $\pi \circ \sigma(g) = \pi \circ \sigma'(g)$ hence there is an element $\nu(g) \in N$ such that $\sigma'(g) = \nu(g) \sigma(g)$. Let $\phi_{\sigma'} \col G \to \Aut(N)$ be the induced map to $\Aut(N)$.
We see that for every $n \in N$,
\[
^{\phi_{\sigma'}(g)} n = ^{\nu(g)} \left( ^{\phi_\sigma(g)} n \right)
\]
so $\phi_{\sigma'}(g)$ and $\phi_\sigma(g)$ only differ by an inner automorphism. We conclude that the projection $\psi \col G \to \Out(N)$ of both $\phi_\sigma$ and $\phi_{\sigma'}$ is the same map $\psi \col G \to \Out(N)$. Hence $\psi$ does not depend on the section.

To see that $\psi$ is a homomorphism, let $g,h \in G$. As $\pi(\sigma(g) \sigma(h) \sigma(gh)^{-1}) = 1$, there is an element $\nu(g,h) \in N$ such that $\iota(\nu(g,h)) = \sigma(g) \sigma(h) \sigma(gh)^{-1}$. In particular, for every $n \in N$,
\[
^{\phi_\sigma(g) \circ \phi_\sigma(h)} n = ^{\nu(g,h)} \left( ^{\phi_\sigma(gh)} n \right)
\]
and hence $\phi_\sigma(g) \circ \phi_\sigma(h)$ and $\phi_\sigma(gh)$ only differ by an inner automorphism, so $\psi(g) \circ \psi(h) = \psi(gh)$ and $\psi \col G \to \Out(N)$ is indeed a homomorphism.

If $1 \to N \overset{\iota_1}{\to} E_1 \overset{\pi_1}{\to} G \to 1$ and $1 \to N \overset{\iota_2}{\to} E_2 \overset{\pi_2}{\to} G \to 1$ are two equivalent group extensions (see Definition \ref{defn:equivalence of group extensions}) with isomorphism $\Phi \col E_1 \to E_2$
and if $\sigma_1 \col G \to E_1$ is a section of $\pi_1 \col G \to E_1$ then it is easy to see that $\sigma_2 = \Phi \circ \sigma_1 \col G \to E_2$ is a section of $\pi_2 \col E_2 \to G$ and that $\phi_{\sigma_1} = \phi_{\sigma_2}$. Hence the induced homomorphism $\psi \col G \to \Out(N)$ is the same.
We collect these facts in a proposition:
\begin{prop} \label{prop:very basic properties group extensions}
Let $1 \to N \overset{\iota}{\to} E \overset{\pi}{\to} G \to 1$ be a group extension of $G$ by $N$. Any two sections $\sigma, \sigma' \col G \to E$ of $\pi$ induce the same homomorphism $\psi \col G \to \Out(N)$. Moreover, two equvivalent group extensions (see Definition \ref{defn:equivalence of group extensions}) induce the same homomorphism $\psi \col G \to \Out(N)$.
\end{prop}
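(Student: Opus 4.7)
The plan is to unwind the definitions of $\phi_\sigma$ and $\psi$ and exploit two structural facts: (a) any two sections of $\pi$ differ pointwise by elements of $\iota(N) = \ker\pi$, and (b) an equivalence $\Phi \col E_1 \to E_2$ of extensions intertwines the maps $\iota_i$ and $\pi_i$. Both statements of the proposition should then follow by direct manipulation of conjugation.

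For the first assertion, I would take two sections $\sigma, \sigma' \col G \to E$ and, for each $g \in G$, observe that $\pi(\sigma'(g) \sigma(g)^{-1}) = 1$. Hence there exists a unique $\nu(g) \in N$ with $\sigma'(g) = \iota(\nu(g)) \sigma(g)$. Conjugating $\iota(n)$ by $\sigma'(g)$ and applying $\iota^{-1}$ yields
\[
\phi_{\sigma'}(g)(n) \;=\; \phi_{\nu(g)}\bigl(\phi_\sigma(g)(n)\bigr),
\]
so $\phi_{\sigma'}(g)$ and $\phi_\sigma(g)$ differ by the inner automorphism $\phi_{\nu(g)} \in \Inn(N)$. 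Projecting to $\Out(N) = \Aut(N)/\Inn(N)$ therefore identifies them, proving that $\psi$ is independent of the chosen section. I would then verify that $\psi$ really is a homomorphism by noting that for any $g,h \in G$ the element $\sigma(g)\sigma(h)\sigma(gh)^{-1}$ lies in $\ker\pi$, so it equals $\iota(\nu(g,h))$ for some $\nu(g,h) \in N$; conjugating $\iota(n)$ by this identity produces $\phi_\sigma(g)\circ \phi_\sigma(h) = \phi_{\nu(g,h)} \circ \phi_\sigma(gh)$, which descends to $\psi(g)\psi(h) = \psi(gh)$ in $\Out(N)$.

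For the second assertion, suppose $\Phi \col E_1 \to E_2$ realises an equivalence of extensions, so that $\Phi \circ \iota_1 = \iota_2$ and $\pi_2 \circ \Phi = \pi_1$. Given a section $\sigma_1$ of $\pi_1$, set $\sigma_2 := \Phi \circ \sigma_1$; the commutativity of the diagram immediately gives $\pi_2 \circ \sigma_2 = \pi_1 \circ \sigma_1 = \mathrm{id}_G$, so $\sigma_2$ is a section of $\pi_2$. Then for any $n \in N$,
\[
\iota_2\bigl(\phi_{\sigma_2}(g)(n)\bigr) \;=\; \sigma_2(g)\, \iota_2(n)\, \sigma_2(g)^{-1} \;=\; \Phi\bigl(\sigma_1(g)\, \iota_1(n)\, \sigma_1(g)^{-1}\bigr) \;=\; \iota_2\bigl(\phi_{\sigma_1}(g)(n)\bigr).
\]
Injectivity of $\iota_2$ gives $\phi_{\sigma_2}(g) = \phi_{\sigma_1}(g)$ in $\Aut(N)$, and combining with the first assertion (applied within each extension) shows that both extensions induce the same $\psi \col G \to \Out(N)$.

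There is no genuine obstacle here; the whole argument is a bookkeeping exercise in identifying $N$ with $\iota(N) \trianglelefteq E$ and keeping track of when two automorphisms differ by an inner one. The only place to be careful is distinguishing equality in $\Aut(N)$ (which holds for equivalent extensions after pushing one section through $\Phi$) from mere equality modulo $\Inn(N)$ (which is all one gets when comparing two \emph{different} sections of the same extension).
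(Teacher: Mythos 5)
Your proof is correct and follows essentially the same route as the paper: decompose $\sigma'(g) = \iota(\nu(g))\sigma(g)$ to compare sections up to $\Inn(N)$, use $\sigma(g)\sigma(h)\sigma(gh)^{-1} \in \ker\pi$ for the homomorphism property, and push a section through $\Phi$ for the equivalence claim. No differences worth remarking on.
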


\subsection{Non-abelian cocyclces} \label{subsec:group extensions induce non-abelian cocycle} 

To show Theorem \ref{thm:main} we will transform the problem of finding all group extensions of $G$ by $N$ which induce $\psi$
to the problem of finding certain pairs $(\erm, \phi)$ called \emph{non-abelian cocycles} where $e \col G \times G \to N$ and $\phi \col G \to \Aut(N)$ are certain set-theoretic functions.

\begin{defn} \label{defn:non-abelian cocycle}
Let $G, N$ be groups and let  $\psi \col G \to \Out(N)$ be a homomorphism.
Let $\erm: G \times G \to N$ and $\phi \col G \to \Aut(N)$ be set-theoretic functions such that
\begin{itemize}
\item[(i)] 
 $\phi \col G \to \Aut(N)$ projects to $\psi \col G \to \Out(N)$, $\phi(1) = 1$ and for all $g \in G$, $\erm(1,g) = \erm(g,1) = 1$,
\item[(ii)] for all $g,h \in G$ and $n \in N$, $^{\erm(g,h)} n = ^{\phi(g) \phi(h) \phi(gh)^{-1}} n$ and
\item[(iii)] for all $g,h,i \in G$,  $^{\phi(g)}\erm(h,i) \erm(g,hi) = \erm(g,h) \erm(gh,i)$.
\end{itemize}
Then we say that $(\erm,\phi)$ is a \emph{non-abelian cocycle with respect to $(G, N, \psi)$}.
\end{defn} 
The idea of studying extensions using these non-abelian cocycles is classical; see Chapter IV, 5.6 of \cite{brown}. Here, the author simply calls this a ``cocycle condition''. In order not to confuse it with the cocycle condition of an ordinary $2$-cycle we call it ``non-abelian cocycle'' with respect to the data for group extensions.
Consider Remark \ref{rmk:convention:conjugation and acting by automorphisms} for the notation of conjugation and action of automorphisms.

Every group extension $1 \to N \overset{\iota}{\to} E \overset{\pi}{\to} G \to 1$ that induces $\psi \col G \to \Out(N)$ yields a non-abelian cocycle with respect to $(G,N,\psi)$: As in Subsection \ref{subsec:Basic properties Group Extensions}, pick a set-theoretic section
$\sigma \col G \to E$ such that $\sigma(1) = 1$, define $\phi_\sigma \col G \to \Aut(N)$ via $^{\phi_\sigma(g)} n =   \iota^{-1} \left( ^{\sigma(g)} \iota(n) \right)$ and define $\erm_\sigma \col G \times G \to N$
via $\erm_\sigma \col (g,h) \mapsto \iota^{-1}(\sigma(g) \sigma(h) \sigma(gh)^{-1})$.
Observe that $\sigma$ is a quasihomomorphism if and only if $\erm_\sigma$ has finite image.
\begin{prop} \label{prop: extension induce non-abelian cocycle} 
Let $1 \to N \to E \to G \to 1$ be an extension which induces $\psi$.
\begin{enumerate}
\item \label{item:non-abelian cocycle} For any section $\sigma \col G \to E$ with $\sigma(1) = 1$ the pair $(\erm_\sigma, \phi_\sigma)$ is indeed a non-abelian cocycle with respect to $(G, N, \psi)$.
\item \label{item:general lift} Let $\phi \col G \to \Aut(N)$ be a lift of $\psi$ with $\phi(1) = 1$. Then there is a section $\sigma \col G \to E$ with $\sigma(1) = 1$ such that $\phi_\sigma = \phi$, for $\phi_\sigma$ as above.
If the extension is in addition bounded 
(see Definition \ref{defn:bounded extension}) and $\phi$ has finite image, then $\sigma$ may be chosen to be a quasihomomorphism with $\sigma(1) = 1$.
\end{enumerate}
\end{prop}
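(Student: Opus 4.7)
The plan is to treat the two parts in turn. Part (\ref{item:non-abelian cocycle}) is essentially a verification following the classical template in Chapter IV.5 of \cite{brown}, while the bounded refinement in part (\ref{item:general lift}) is where the genuine new content lies.

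For part (\ref{item:non-abelian cocycle}), I would simply unwind the definitions of $\erm_\sigma$ and $\phi_\sigma$. The normalisation conditions in (i) are immediate from $\sigma(1)=1$. Condition (ii) reduces to the tautology $^{\iota(\erm_\sigma(g,h))}\iota(n) = {}^{\sigma(g)\sigma(h)\sigma(gh)^{-1}}\iota(n)$ after one applies $\iota^{-1}$ and recognises both sides as describing the same automorphism of $N$. Condition (iii) is the usual non-abelian cocycle identity obtained by expanding the triple product $\sigma(g)\sigma(h)\sigma(i)$ under the two possible associations and comparing the $N$-valued factors sitting in front of $\sigma(ghi)$; the conjugation that moves $\iota(\erm_\sigma(h,i))$ past $\sigma(g)$ is precisely what introduces the $^{\phi_\sigma(g)}$ on the right-hand side.

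For the first half of part (\ref{item:general lift}) I would pick any set-theoretic section $\sigma_0$ of $\pi$ with $\sigma_0(1)=1$ and then modify it. Since both $\phi(g)$ and $\phi_{\sigma_0}(g)$ project to $\psi(g)\in \Out(N)$, their ratio $\phi(g)\circ \phi_{\sigma_0}(g)^{-1}$ lies in $\Inn(N)$, so we may choose $\nu(g)\in N$ whose conjugation action equals this ratio, with $\nu(1)=1$. Putting $\sigma(g) = \iota(\nu(g))\sigma_0(g)$ and checking $\phi_\sigma=\phi$ is then a one-line calculation: conjugating $\iota(n)$ by $\sigma(g)$ factors as conjugation by $\iota(\nu(g))$ followed by conjugation by $\sigma_0(g)$, which by construction composes to $\phi(g)$.

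The main obstacle is the bounded refinement. Here I would begin with a section $\tau$ witnessing that the extension is bounded, normalise $\tau(1)=1$ via Proposition \ref{prop:may assume that sigma one is one}, and then try to choose the correcting cochain $\nu \col G \to N$ so that $\sigma(g) := \iota(\nu(g))\tau(g)$ satisfies both $\phi_\sigma = \phi$ and $\sigma$ remains a quasihomomorphism. The key observation is that because $\phi$ and $\phi_\tau$ \emph{both} have finite image in $\Aut(N)$, the pointwise ratio $g\mapsto \phi(g)\circ\phi_\tau(g)^{-1}$ takes values in a \emph{finite} subset of $\Inn(N)$; since the map $N \to \Inn(N)$ has kernel $Z(N)$, we may pick one preimage in $N$ for each of these finitely many inner automorphisms and assemble these choices into a $\nu$ with finite image in $N$ satisfying $\nu(1)=1$. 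Expanding $\sigma(g)\sigma(h)\sigma(gh)^{-1}$ inside $\iota(N)$ then yields a product of four factors, $\nu(g)$, $^{\phi_\tau(g)}\nu(h)$, $\erm_\tau(g,h)$, and $\nu(gh)^{-1}$, each ranging over a finite set, so the defect of $\sigma$ is finite. The delicate point is that the finite image hypothesis must hold in $\Aut(N)$ and not merely in $\Out(N)$: this is exactly what upgrades the pointwise freedom in $\nu(g)$ modulo $Z(N)$ into a globally finite-valued function, and it is what I expect to be the hard part to get right in the write-up.
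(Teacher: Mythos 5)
Your proposal is correct and follows essentially the same route as the paper's proof: part (1) is the classical verification cited from Brown, and for the bounded refinement in part (2) the paper also takes a quasihomomorphic section $\tau$ witnessing boundedness with $\phi_\tau$ of finite image, chooses the correcting cochain $\nu$ to have finite image, and expands $\sigma(g)\sigma(h)\sigma(gh)^{-1}$ into the same four finite-range factors $\nu(g)$, $^{\tau(g)}\nu(h)$, $\drm_\tau(g,h)$, $\nu(gh)^{-1}$. Your remark that the finiteness of $\nu$'s image is forced by both $\phi$ and $\phi_\tau$ having finite image in $\Aut(N)$ (not merely $\Out(N)$) is exactly the implicit justification behind the paper's one-line assertion that $\nu$ may be chosen with finite image.
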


\begin{proof}
Part $(1)$ is classical and may be found in the proof of Theorem 5.4 of \cite{brown}.

To see (\ref{item:general lift}), let $\tau \col G \to E$ be any section of $\pi \col E \to G$ with $\tau(1) = 1$. Both $\phi$ and $\phi_\tau$ are lifts of $\psi$ and hence differ only by an inner automorphism.
Let $\nu \col G \to N$ be a representative of such an inner automorphism with $\nu(1)=1$. Then for every $n \in N$, $g \in G$,
\[
^{\phi(g)} n = ^{\nu(g)} \left( ^{\phi_\tau(g)} n \right) = ^{\left( \nu(g) \tau(g) \right) } n.
\]
Let $\sigma \col G \to E$ be the section defined via $\sigma(g) = \nu(g) \tau(g)$. Then we see that $\phi = \phi_\sigma$.
Assume now that the extension is  in addition bounded and that $\phi$ has finite image.
Since the extension is bounded, there is a section $\tau \col G \to E$ which is a quasihomomorphism and such that $\phi_\tau \col G \to \Aut(N)$ has finite image.
By Proposition \ref{prop:may assume that sigma one is one}
we may assume that $\tau(1) = 1$.
We see that we may choose $\nu \col G \to N$ to also have finite image.

We claim that the section $\sigma \col G \to E$ defined via $\sigma \col g \mapsto \nu(g) \tau(g)$ is a quasihomomorphism. Indeed for any $g,h \in G$ we calculate
\begin{align*}
\sigma(g) \sigma(h) \sigma(gh)^{-1} &= 
\nu(g) \tau(g) \nu(h) \tau(h) \tau(gh)^{-1} \nu(gh)^{-1} \\
&= \nu(g) ^{\tau(g)} \nu(h) \left( \tau(g) \tau(h) \tau(gh)^{-1} \right) \nu(gh)^{-1} \\
&\in  \mathcal{N} \mathcal{M} D(\tau) \mathcal{N}^{-1} 
\end{align*}
where $\mathcal{N} = \{ \nu(g) \mid g \in G \}$, the image of $\nu$, $\mathcal{M} = \{ ^{\tau(g)} \nu(h) \mid g,h \}$ which is finite. So all sets on the right hand side are finite and hence $\sigma$ is a quasihomomorphism. This concludes the proof of Proposition \ref{prop: extension induce non-abelian cocycle}.
\end{proof}

\subsection{Non-abelian cocycles yield group extensions} \label{subsec:non-abelian cocycle yield group extensions}
Let $(\erm, \phi)$ be a non-abelian cocycle with respect to $(G,N,\psi)$.
We now describe how $(\erm, \phi)$ gives rise to a group extension $1 \to N \to \Erm(\erm,\phi) \to G \to 1$ which induces $\psi$.
For this we define a group structure on the set $N \times G$ via
\[
(n_1, g_1) \cdot (n_2, g_2) = (n_1 \text{ }^{\phi(g_1)}n_2 \erm(g_1,g_2), g_1 g_2)
\]
for two elements $(n_1, g_1), (n_2, g_2) \in N \times G$.
We denote this group by $\Erm(\erm,\phi)$ and define the maps 
$\iota \col N \to \Erm(\erm, \phi)$ via $\iota \col n \mapsto (n,1)$, $\pi \col \Erm(\erm, \phi) \to G$ via $\pi \col (n,g) \mapsto g$ and $\sigma \col G \to \Erm(\erm, \phi)$ via $\sigma \col g \mapsto (1, g)$.

 \begin{prop} \label{prop:non-abelian cocycles induce extensions} 
 Let $(\erm, \phi )$ be a non-abelian cocycle with respect to $(G, N , \psi)$ and let $\Erm(\erm,\phi)$, $\iota \col N \to \Erm(\erm,\phi)$, $\pi \col \Erm(\erm,\phi) \to G$ and $\sigma \col G \to \Erm(\erm,\phi)$ be as above.
 Then
\begin{enumerate}
\item $1 \to N \overset{\iota}{\to} \Erm(\erm, \phi) \overset{\pi}{\to} G \to 1$
is an extension of $G$ by $N$ inducing $\psi \col G \to \Out(N)$. Moreover, $\sigma$ is a section of $\pi$ such that $\erm = \erm_\sigma$ and $\phi = \phi_\sigma$.
\item If both $\phi \col G \to \Aut(N)$ and $\erm \col G \times G \to N$ have finite image then the extension we obtain is bounded (see Definition \ref{defn:bounded extension}).
\end{enumerate}

 \end{prop}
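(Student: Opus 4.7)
The plan is to verify directly that the set $N \times G$ equipped with the stated binary operation is a group, then to identify the three maps $\iota, \pi, \sigma$ as producing the desired extension and section, and finally to observe that finiteness of the images of $\erm$ and $\phi$ forces $\sigma$ to satisfy Definition \ref{defn:bounded extension}.

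For part (1), the first task is the group axioms. The identity is $(1, 1)$: this uses the normalisations $\phi(1) = 1$ and $\erm(1, g) = \erm(g, 1) = 1$ from condition (i). The main computation is associativity. Expanding $((n_1, g_1)(n_2, g_2))(n_3, g_3)$ and $(n_1, g_1)((n_2, g_2)(n_3, g_3))$ and cancelling the common prefix $n_1 \cdot {}^{\phi(g_1)} n_2$ from the $N$-coordinate, one has to show
\[
\erm(g_1, g_2) \cdot {}^{\phi(g_1 g_2)} n_3 \cdot \erm(g_1 g_2, g_3)
= {}^{\phi(g_1) \phi(g_2)} n_3 \cdot {}^{\phi(g_1)} \erm(g_2, g_3) \cdot \erm(g_1, g_2 g_3).
\]
The two cocycle conditions combine exactly: condition (iii) rewrites ${}^{\phi(g_1)} \erm(g_2, g_3) \cdot \erm(g_1, g_2 g_3)$ as $\erm(g_1, g_2) \cdot \erm(g_1 g_2, g_3)$, and condition (ii) identifies ${}^{\phi(g_1) \phi(g_2)} n_3$ with ${}^{\erm(g_1, g_2)} {}^{\phi(g_1 g_2)} n_3$, and these two substitutions make the two sides coincide. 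Inverses are then constructed explicitly by $(n, g)^{-1} = ({}^{\phi(g)^{-1}}(\erm(g, g^{-1})^{-1} n^{-1}), g^{-1})$, with a short check (again using (ii) and (iii)) that the right and left inverses agree. Next, $\iota$ is a homomorphism because $(n_1, 1)(n_2, 1) = (n_1 n_2, 1)$ by $\phi(1) = 1$ and $\erm(1, 1) = 1$, $\pi$ is clearly a homomorphism, and $\ker \pi = \iota(N)$ is immediate, so the sequence is exact. For the section, a direct calculation gives $\sigma(g) \sigma(h) = (\erm(g, h), gh) = \iota(\erm(g, h)) \sigma(gh)$, hence $\erm_\sigma = \erm$, and $\sigma(g) \iota(n) \sigma(g)^{-1} = \iota({}^{\phi(g)} n)$, hence $\phi_\sigma = \phi$. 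Since $\phi$ lifts $\psi$ by (i), the induced outer-automorphism homomorphism $G \to \Out(N)$ is $\psi$.

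For part (2), assume that both $\im \phi \subset \Aut(N)$ and $\im \erm \subset N$ are finite. Then $D(\sigma) = \{ \sigma(g) \sigma(h) \sigma(gh)^{-1} \mid g, h \in G \} = \iota(\im \erm)$ is finite, so $\sigma$ is a quasihomomorphism, and the map $\phi_\sigma = \phi \col G \to \Aut(N)$ has finite image by hypothesis. Both conditions of Definition \ref{defn:bounded extension} are therefore satisfied, so the extension $1 \to N \to \Erm(\erm, \phi) \to G \to 1$ is bounded. The only real obstacle is the associativity check, where both cocycle conditions must be invoked simultaneously — this is precisely what Definition \ref{defn:non-abelian cocycle} was crafted to guarantee, so once that calculation is carried out, the remaining pieces are formal.
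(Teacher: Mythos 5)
Your proof of part~(1) is essentially correct and is considerably more explicit than the paper's, which simply cites Chapter~IV.6 of Brown for the construction. Working out the group axioms by hand is a legitimate and instructive route, and your identification of exactly where conditions~(ii) and~(iii) of Definition~\ref{defn:non-abelian cocycle} enter the associativity check is the key conceptual point. Your computations of $\erm_\sigma = \erm$ and $\phi_\sigma = \phi$, and your part~(2), match the paper's argument (the defect of $\sigma$ is $\iota(\im \erm)$, and $\phi_\sigma = \phi$ has finite image by hypothesis).

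One small slip: your inverse formula has the two factors in the wrong order. Solving $n \cdot {}^{\phi(g)}m \cdot \erm(g,g^{-1}) = 1$ for $m$ gives
\[
(n,g)^{-1} = \bigl({}^{\phi(g)^{-1}}\bigl(n^{-1}\,\erm(g,g^{-1})^{-1}\bigr),\ g^{-1}\bigr),
\]
not ${}^{\phi(g)^{-1}}(\erm(g,g^{-1})^{-1} n^{-1})$; since $N$ need not be abelian, your formula is not in general a right inverse. Also, once you have associativity, a two-sided identity, and right inverses, the structure is automatically a group (right inverses are then two-sided), so no extra appeal to~(ii) and~(iii) is needed at that step. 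Neither issue affects the overall soundness of the argument.
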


\begin{proof}

Part $(1)$ is classical; see Chapter IV.6 of \cite{brown} where such extensions from non-abelian cocycles are implicitly constructed.

For part $(2)$, suppose that both $\erm$ and $\phi$ have finite image then the section $\sigma \col G \to \Erm(\erm, \phi)$ is a quasihomomorphism as the defect is just the image of $\erm$ and, moreover, the map $\phi_\sigma = \phi$ has finite image. Hence the extension is bounded. This concludes the proof of Proposition \ref{prop:non-abelian cocycles induce extensions}.
\end{proof}

 For the proof of Theorem \ref{thm:main} we will need to determine when two non-abelian cocycles correspond up to equivalence to the same group extension. We will need the following statement which is stated, though not proved, at the end of IV.6 in \cite{brown}.
 \begin{prop} \label{prop: non-abelian cocycles and cohomology stuff} 
 Let $G$, $N$ be groups, let $\psi \col G \to \Out(N)$ be a homomorphism and let $\phi \col G \to \Aut(N)$ be a lift with $\phi(1) = 1$.
Let $\erm, \erm' \col G \times G \to N$ be two set-theoretic functions such that for all $g \in G$, $\erm(1,g) = \erm(g,1) = 1$ and $\erm'(1,g) = \erm'(g,1) = 1$.
\begin{enumerate}
\item \label{item:two non-abelian cocycles differ by cocycle}
If $(\erm, \phi)$ is a non-abelian cocycle with respect to $(G,N, \psi)$ then $(\erm', \phi)$ is a non-abelian cocycle with respect to $(G, N, \psi)$ if and only if 
 there is a map $\crm \col G \times G \to Z(N) = Z$ satisfying $\delta^2 \crm = 0$ such that for all $g,h \in G$, $\erm'(g,h) = \crm(g,h) \cdot \erm(g,h)$ and for all $g \in G$, $\crm(1,g) = \crm(g,1) = 1$.
\item \label{item:two equivalent non-abelian cocycles} 
If both $(\erm, \phi)$ and $(\erm', \phi)$ are non-abelian cocycles with respect to $(G,N, \psi)$ then the group extensions 
corresponding to $(\erm, \phi)$ and $(\erm', \phi)$ are equivalent if and only if there is a map $\zrm \col G \to Z = Z(N)$ with $\zrm(1) = 1$ such that $\erm(g,h) = (\delta^1 \zrm)(g,h) \erm'(g,h)$.
\end{enumerate} 
Recall that $Z(N) = Z$ denotes the centre of $N$. 
 \end{prop}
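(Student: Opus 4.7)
The plan is to verify each equivalence by direct calculation, exploiting two structural facts: two non-abelian cocycles sharing the same $\phi$-part must differ by something central in $N$, and an equivalence of extensions realised as twisted products $N \times G$ necessarily takes a very rigid form. In both cases the arguments are the ``classical'' ones alluded to at the end of IV.6 of \cite{brown}, but we must spell them out carefully.

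For Part (\ref{item:two non-abelian cocycles differ by cocycle}), I would set $\crm(g,h) := \erm'(g,h) \erm(g,h)^{-1}$, assuming $(\erm', \phi)$ is also a non-abelian cocycle. Condition $(ii)$ applied to both $\erm$ and $\erm'$ shows that both elements induce the same inner automorphism $\phi(g) \phi(h) \phi(gh)^{-1}$ of $N$, so $\crm(g,h) \in Z$; the normalisation $\crm(1,g) = \crm(g,1) = 1$ is immediate. Dividing identity $(iii)$ for $\erm'$ by identity $(iii)$ for $\erm$ and using that all remaining terms are central (hence commute past one another) yields the abelian cocycle identity $\delta^2 \crm = 0$, where the $G$-action on $Z$ is the one induced by $\phi$ --- equivalently, by $\psi$, since $\Inn(N)$ acts trivially on $Z$. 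The converse runs the same calculation in reverse: given central $\crm$ with $\delta^2 \crm = 0$, setting $\erm' := \crm \cdot \erm$ makes $(ii)$ trivial and $(iii)$ fall out immediately.

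For Part (\ref{item:two equivalent non-abelian cocycles}), I would exploit the explicit description of $\Erm(\erm, \phi)$ as $N \times G$ with the twisted multiplication from Proposition \ref{prop:non-abelian cocycles induce extensions}. An equivalence $\Phi \col \Erm(\erm,\phi) \to \Erm(\erm',\phi)$ must fix $N \times \{1\}$ pointwise and respect the projection to $G$, so it has the form $\Phi(n,g) = (F(n,g), g)$ with $F(n,1) = n$. Factoring $(n,g) = (n,1)(1,g)$ in $\Erm(\erm, \phi)$ and setting $\zrm(g)$ to be the first coordinate of $\Phi(1,g)$, one reads off $F(n,g) = n \cdot \zrm(g)$ with $\zrm(1) = 1$. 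Imposing multiplicativity on a product $(n_1, g_1)(n_2, g_2)$ and comparing first coordinates then yields two consequences: first, $\zrm(g_1)$ must commute with every element of $\phi(g_1)(N) = N$, forcing $\zrm(g_1) \in Z$; second, the leftover equation is precisely $\erm(g_1,g_2) = (\delta^1 \zrm)(g_1,g_2) \cdot \erm'(g_1,g_2)$. Conversely, given such a $\zrm \col G \to Z$, defining $\Phi(n,g) := (n \zrm(g), g)$ produces a group isomorphism by direct verification (its inverse is given by $\zrm^{-1}$).

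The bookkeeping --- tracking where elements of $N$, $\Aut(N)$, and $Z$ live and the order in which their multiplications occur --- will be the main source of tedium, though not of genuine difficulty. The structural observation that makes everything compress cleanly is that $\phi(g)|_Z$ coincides with the $G$-action on $Z$ induced by $\psi$ (inner automorphisms fix $Z$ pointwise); this lets us freely switch to additive notation inside $Z$ and recognise the coboundaries $\delta^1 \zrm$ and $\delta^2 \crm$ as the classical ones from Subsection \ref{subsec:bounded cohomology}.
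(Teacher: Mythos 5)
Your proposal is correct and follows essentially the same route as the paper's proof. For Part (1) both arguments set $\crm = \erm'\erm^{-1}$, use condition $(ii)$ to land in $Z$, and divide the two instances of condition $(iii)$; for Part (2) both directions rely on the explicit twisted-product model and the normalisation $\Phi(n,1)=(n,1)$, differing from the paper only in that you extract $\zrm(g)\in Z$ by specialising the multiplicativity equation rather than by comparing the conjugation actions of the two sections, which is a cosmetic reorganisation.
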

 
\begin{proof}
 To see (\ref{item:two non-abelian cocycles differ by cocycle}), note that for every $g,h \in G$, $n \in N$,
 \[
 ^{\erm(g,h)} n = ^{\phi(g) \phi(h) \phi(gh)^{-1} } n = ^{\erm'(g,h)} n
 \]
 by $(ii)$ of Definition \ref{defn:non-abelian cocycle}. 
 Hence there is an element $\crm(g,h) \in Z(N)$ such that $\erm'(g,h) = \crm(g,h) \erm(g,h)$ and for all $g \in G$, $\crm(1,g) = \crm(g,1) = 1$. Moreover, for every $g,h,i \in G$,
 \begin{align*}
 ^{\phi(g)}\erm'(h,i) \erm'(g,hi) &= \erm'(g,h) \erm'(gh,i) \\
 ^{\phi(g)} \crm(h,i) ^{\phi(g)} \erm(h,i) \crm(g,hi) \erm(g,hi) &= \crm(g,h) \erm(g,h) \crm(gh,i) \erm(gh,i) \\
 (\delta^2 \crm (g,h,i)) ^{\phi(g)}\erm(h,i) \erm(g,hi) &= \erm(g,h) \erm(gh,i) \\
\delta^2 \crm(g,h,i) &= 1
 \end{align*}
and hence for $\delta^2 \crm = 0$ if we restrict to $Z$. On the other hand the same calculation shows that if $(\erm, \phi)$ is a non-abelian cocycle and $\crm \col G \times G \to Z(N)$ satisfies $\delta^2 \crm = 0$ then $(\erm', \phi)$ is a non-abelian cocycle with $\erm'(g,h) = \crm(g,h) \erm(g,h)$.
 
For (\ref{item:two equivalent non-abelian cocycles}) suppose that there is a $\zrm \col G \to Z$ as in the proposition.
Define the map $\Phi \col \Erm(\erm, \phi) \to \Erm(\erm', \phi)$ via $\Phi \col (n,g) \mapsto (n \zrm(g), g)$.
Then for every $(n_1,g_1), (n_2,g_2) \in \Erm(\erm, \phi)$,
\begin{align*}
\Phi \left( (n_1,g_1) \right) \cdot \Phi \left( (n_2,g_2) \right) &= (n_1 \zrm(g_1), g_1) \cdot (n_2 \zrm(g_2),g_2) \\
&= (n_1 ^{\phi(g_1)} n_2 \zrm(g_1) ^{\phi(g_2)} \zrm(g_2) \erm'(g_1, g_2), g_1 g_2) \\ 
&=  (n_1 ^{\phi(g_1)} n_2 \zrm(g_1 g_2) \delta^1 \zrm(g_1,g_2) \erm'(g_1, g_2), g_1 g_2)  \\
&=  (n_1 ^{\phi(g_1)} n_2 \erm(g_1,g_2) \zrm(g_1 g_2) , g_1 g_2  )\\ 
&= \Phi \left( (n_1,g_1) \cdot (n_2,g_2)\right) 
\end{align*}
and hence $\Phi$ is a homomorphism. It is easy to see that $\Phi$ is an isomorphism and that $\Phi$ fits into the diagram of Definition \ref{defn:equivalence of group extensions}. Hence the extensions corresponding to $(\erm, \phi)$ and $(\erm', \phi)$ are equivalent.

On the other hand suppose that the extensions 
$1 \to N \overset{\iota}{\to} \Erm(\erm, \phi) \overset{\pi}{\to} G \to 1$ and $1 \to N \overset{\iota'}{\to} \Erm(\erm', \phi) \overset{\pi'}{\to} G \to 1$ are equivalent with sections $\sigma, \sigma'$ as before with Isomorphism $\Phi \col \Erm(\erm, \phi) \to \Erm(\erm', \phi)$.

Note that for all $g \in G$, $\pi' \circ \Phi \left( (1,g) \right) = g$ and hence 
the second coordinate of $\Phi((1,g)) \in \Erm(\erm, \phi)$ is $g$. Define $\zrm \col G \to N$ via $\Phi((1,g)) = (\zrm(g),g)$.
Observe that $^{\sigma(g)} \iota(n) = (^{\phi(g)} n, 1)$ and $^{\sigma'(g)} \iota(n) =    (^{\phi(g)} n,1)$ and hence $\sigma(g)$ and $\sigma'(g)$ only differ by an element in the centre hence $\zrm(g) \in Z$.
Note that for every $g,h \in G$,
\begin{align*}
(\erm(g,h),1) &= \sigma(g) \sigma(h) \sigma(gh)^{-1} \\
\Phi \Big( (\erm(g,h),1) \Big) &= \Phi \Big( \sigma(g) \Big) \cdot \Phi \Big( \sigma(h) \Big) \cdot  \Phi \Big( \sigma(gh) \Big)^{-1} \\
(\erm(g,h),1) &= (\zrm(g) ^{\phi(g)} \zrm(h) \zrm(gh)^{-1} \erm'(g,h), 1).
\end{align*}
Comparing the last line we see that 
$\erm(g,h) = \delta^1 \zrm(g,h) \erm'(g,h)$ which concludes the proposition. 
\end{proof}

\subsection{Proof of Theorem \ref{thm:main}} \label{subsec:proof main thm}
We can now prove Theorem \ref{thm:main} using the correspondence of group extensions with non-abelian cocycles.

\begin{reptheorem}{thm:main}
 Let $G$ and $N$ be groups and suppose that $Z = Z(N)$, the centre of $N$, is
equipped with a norm $\| \cdot \|$ such that $(Z, \| \cdot \|)$ has finite balls. 
Furthermore, let $\psi \col G \to \Out(N)$ be a homomorphism with finite image.
 
 There is a class $\omega_b = \omega_b(G,N,\psi) \in \Hrm_b^3(G,Z)$ such that $\omega_b=0$ in  $\Hrm_b^3(G,Z)$ if and only if
 $\mathcal{E}_b(G,N,\psi) \not = \emptyset$ and $c^3(\omega_b) = \omega$ is the obstruction of Theorem \ref{thm:classical group extensions}.
 If $\mathcal{E}_b(G,N,\psi) \not = \emptyset$, 
then the bijection between the sets  $\Hrm^2(G,Z)$ and $\mathcal{E}(G,N,\psi)$ described in Theorem \ref{thm:classical group extensions} restricts to a bijection between 
 $im(c^2) \subset \Hrm^2(G,Z)$  and $\mathcal{E}_b(G,N,\psi) \subset \Ecl(G, N, \psi)$.
\end{reptheorem}

Recall that a normed $G$-module $Z$ is said to have finite balls if for every $K > 0$ the set $\{ z \in Z \mid \| z \|  \leq K \}$ is finite.
We will split the proof into several claims.
Claim \ref{claim:existence of zeta to N} associates to a tuple $(G,N, \psi)$ as in the theorem a function $\zeta \col G \times G \to N$ which we then use to define the obstruction class $ \omega_b = [\orm_b] \in \Hrm_b^3(G,Z)$ in Equation (\ref{equ:obstruction}).   
In Claims \ref{claim:o maps to centre and is cocycle} and 
\ref{claim:obstruction independent of choices}
we see that $\orm_b$ is indeed a bounded cocycle and that $\omega_b = [\orm_b] \in \Hrm_b^3(G,Z)$ is independent of the choices made.
Finally in Claim \ref{claim:obstruction encodes extensions exist} we see that $\omega_b$ indeed encodes if (bounded) extensions for the data $(G,N, \psi)$ exist.
In Claim \ref{claim:correspondence h2 and extensions} we construct a bijection $\Psi$ between $\Hrm^2(G,Z)$ (resp. $\im(c^2)$) and (bounded) extensions.

Let $G$, $N$, $\psi \col G \to \Out(N)$ and $Z, \| \cdot \|$ be  as in the theorem.
Choose a lift $\phi \col G \to \Aut(N)$ of $\psi$ with finite image such that $\phi(1) = 1$.

\begin{claim} \label{claim:existence of zeta to N}
There is a function $\zeta \col G \times G \to N$ such that for all $g,h \in G$, $n \in N$,
\[
^{\zeta(g,h)} n = ^{\phi(g) \phi(h) \phi(gh)^{-1}} n
\]
where $\zeta$ has finite image in $N$ and for all $g \in G$, $\zeta(g,1)=\zeta(1,g)=1$.
\end{claim}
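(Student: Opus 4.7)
The plan is to build $\zeta$ as a selection from the finitely many inner automorphisms that arise from the ``failure'' of $\phi$ to be a homomorphism. Two key observations make the construction essentially automatic. First, since $\psi = \pi \circ \phi$ is a homomorphism (with $\pi \col \Aut(N) \to \Out(N)$ the quotient), the automorphism $\phi(g)\phi(h)\phi(gh)^{-1}$ projects to the identity in $\Out(N)$ for every $g,h \in G$ and therefore lies in $\Inn(N)$. Second, since $\phi$ has finite image, the set
\[
F := \{\phi(g)\phi(h)\phi(gh)^{-1} \mid g,h \in G\} \subset \Inn(N)
\]
is finite, as it is contained in $\phi(G) \cdot \phi(G) \cdot \phi(G)^{-1}$.

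Given this, I would pick any set-theoretic section $s \col F \to N$ of the canonical surjection $N \sur \Inn(N)$, $n \mapsto (x \mapsto nxn^{-1})$, subject to the single constraint $s(1) = 1$; this is possible because $1 \in F$ (as $\phi(1) = 1$) and $F$ is finite. Then define
\[
\zeta(g,h) := s\bigl(\phi(g)\phi(h)\phi(gh)^{-1}\bigr).
\]
All three required properties then follow immediately. Conjugation by $\zeta(g,h)$ agrees with $\phi(g)\phi(h)\phi(gh)^{-1}$ by the defining property of $s$; the image of $\zeta$ is contained in the finite set $s(F) \subset N$; and using $\phi(1) = 1$ we compute $\phi(g)\phi(1)\phi(g)^{-1} = 1$ and $\phi(1)\phi(g)\phi(g)^{-1} = 1$ in $\Aut(N)$, so $\zeta(g,1) = \zeta(1,g) = s(1) = 1$.

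There is no substantive obstacle in this claim; it is a bookkeeping repackaging of the hypotheses. What is worth highlighting, however, is that the finiteness of $\phi(G)$ (guaranteed by the assumption that $\psi$ has finite image, via a lift with finite image) is exactly what upgrades an arbitrary pointwise choice of preimages into a function with \emph{finite} image in $N$. Without this finiteness one would still obtain such a $\zeta$ by the axiom of choice, but it would not be controlled; the finite image is precisely the ingredient that will later force the associated obstruction cochain into $C_b^3(G,Z)$ rather than merely $C^3(G,Z)$.
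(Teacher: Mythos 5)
Your proof is correct and takes essentially the same route as the paper's: note that $\phi(g)\phi(h)\phi(gh)^{-1}$ is inner because $\psi$ is a homomorphism, note that the finitely many values it takes lie in a finite subset of $\Inn(N)$ because $\phi$ has finite image, and choose a lift through $N \sur \Inn(N)$ normalized so that $1 \mapsto 1$. Your version simply makes explicit the set-theoretic section $s$ and the verification of the boundary conditions $\zeta(g,1)=\zeta(1,g)=1$, which the paper leaves implicit.
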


\begin{proof}[Proof of Claim \ref{claim:existence of zeta to N}]
For $g,h \in G$ we have that $\psi(g) \psi(h) \psi(gh)^{-1} = 1$, since $\psi$ is a homomorphism. Hence for every $g,h \in G$, the map $\phi(g) \phi(h) \phi(gh)^{-1} \in \Aut(N)$ is an inner automorphism.

As $\phi$ has finite image in $\Aut(N)$, the function $(g,h) \mapsto \phi(g) \phi(h) \phi(gh)^{-1}$ has finite image in $\Inn(N) < \Aut(N)$.
We may find a lift $\zeta \col G \times G \to N$ of this map such that $\zeta$ has finite image and such that $\zeta(1,g) = \zeta(g,1) = 1$. This shows Claim \ref{claim:existence of zeta to N}.
\end{proof}

We now define the obstruction class. 
Define $\orm_b \col G \times G \times G \to N$ so that for all $g,h,i \in G$, 
\begin{equation} \label{equ:obstruction}
^{\phi(g)} \zeta(h,i) \zeta(g,hi) = \orm_b(g,h,i) \zeta(g,h) \zeta(gh,i)
\end{equation}
and observe that $\orm_b$ necessarily has finite image as both $\zeta \col G \times G \to N$ and $\phi \col G \to \Aut(N)$ have finite image.
Also, observe that $\orm_b(g,h,i) = 1$ if one of $g,h,i \in G$ is trivial.

\begin{claim} \label{claim:o maps to centre and is cocycle}
 The function $\orm_b \col G \times G \times G \to N$ maps to $Z=Z(N)<N$ the centre of $N$. 
 Moreover, $\orm_b$ is a non-degenerate bounded cocycle, i.e.\ $\delta^3 \orm_b = 0$.
 \end{claim}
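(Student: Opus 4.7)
The plan is to verify both statements by direct calculation, exploiting the defining property of $\zeta$ as a witness for the failure of $\phi$ to be a homomorphism: namely $\Inn(\zeta(g,h)) = \phi(g)\phi(h)\phi(gh)^{-1}$ in $\Aut(N)$.

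To show $\orm_b(g,h,i) \in Z$, I rearrange the defining equation to
\[
\orm_b(g,h,i) = {}^{\phi(g)}\zeta(h,i) \cdot \zeta(g,hi) \cdot \zeta(gh,i)^{-1} \cdot \zeta(g,h)^{-1},
\]
and compute the inner automorphism this product induces. Using the identity $\Inn({}^{\phi(g)}a) = \phi(g) \circ \Inn(a) \circ \phi(g)^{-1}$, the four factors induce $\phi(g)\phi(h)\phi(i)\phi(hi)^{-1}\phi(g)^{-1}$, $\phi(g)\phi(hi)\phi(ghi)^{-1}$, $\phi(ghi)\phi(i)^{-1}\phi(gh)^{-1}$, and $\phi(gh)\phi(h)^{-1}\phi(g)^{-1}$ respectively; their composition telescopes to the identity. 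Boundedness and non-degeneracy follow immediately: since both $\phi$ and $\zeta$ have finite image, so does $\orm_b$, and its image therefore lies in a finite (hence $\|\cdot\|$-bounded) subset of $Z$ because $Z$ has finite balls; and inserting $g=1$, $h=1$, or $i=1$ into the defining equation, together with the normalisations $\zeta(1,\cdot) = \zeta(\cdot,1) = 1$ and $\phi(1) = \mathrm{id}$, collapses both sides and forces $\orm_b$ to vanish there.

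The main step is the cocycle identity $\delta^3 \orm_b = 0$, which I would obtain from a pentagon-style computation. I compute the automorphism $\phi(g)\phi(h)\phi(i)\phi(j) \in \Aut(N)$ in each of the five parenthesizations of a four-fold product, writing each expression in the form $\Inn(\xi_k) \circ \phi(ghij)$ with $\xi_k \in N$. Since the five automorphisms agree, the $\xi_k$ necessarily differ by central elements, and each pair of parenthesizations related by a single associativity move contributes one factor of $\orm_b$ applied to the three elements being re-associated, or conjugated by $\phi(g)$ when the move happens inside the outer $\phi(g)$-slot. Tracing the pentagon from $((gh)i)j$ to $g(h(ij))$ along its two sides yields
\[
\orm_b(g,h,ij) \cdot \orm_b(gh,i,j) = {}^{\phi(g)}\orm_b(h,i,j) \cdot \orm_b(g,hi,j) \cdot \orm_b(g,h,i).
\]
Since the $G$-action on $Z$ inherited from $\psi$ coincides with the restriction of $\phi(g)$ to $Z$ (inner automorphisms act trivially on the centre), rewriting this multiplicative identity additively in $Z$ is exactly $\delta^3 \orm_b(g,h,i,j) = 0$.

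The main obstacle I anticipate is the bookkeeping in the pentagon computation: correctly identifying which $\orm_b$-term arises on each edge, and in particular verifying that the associativity move nested inside $\phi(g)$ contributes the conjugated term ${}^{\phi(g)}\orm_b(h,i,j)$ rather than $\orm_b(h,i,j)$ itself. Once $\orm_b$ is known to take values in the abelian group $Z$, all factors commute freely and may be collected on either side of each $\xi_k$, so the remaining verification is mechanical and amounts to matching terms in the five parenthesizations.
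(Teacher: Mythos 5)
Your proposal is correct and follows essentially the same route as the paper: the centrality argument by showing the induced inner automorphism telescopes to the identity matches the paper's computation, and the ``pentagon'' is precisely the paper's computation of ${}^{\phi(g)\phi(h)}\zeta(i,k)\,{}^{\phi(g)}\zeta(h,ik)\,\zeta(g,hik)$ in two ways, just named at a higher level of abstraction. The resulting identity $\orm_b(g,h,ik)+\orm_b(gh,i,k)={}^{\phi(g)}\orm_b(h,i,k)+\orm_b(g,hi,k)+\orm_b(g,h,i)$ you state is exactly what the paper obtains before rearranging into $\delta^3\orm_b=0$.
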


\begin{proof}[Proof of Claim \ref{claim:o maps to centre and is cocycle}]
First we show that $\orm_b$ maps to the centre of $N$.
Observe that for all $g,h,i \in G$ and $n \in N$,
\begin{align*}
^{^{\phi(g)} \zeta(h,i) \zeta(g,hi)} n &= ^{ \phi(g) \phi(h) \phi(i) \phi(hi)^{-1} \phi(g)^{-1} } (^{\phi(g) \phi(hi) \phi(ghi)^{-1} } n ) \\
&= ^{\phi(g) \phi(h) \phi(i) \phi(ghi)^{-1}} n \\
&= ^{\phi(g) \phi(h) \phi(gh)^{-1}} ( ^{\phi(gh) \phi(i) \phi(ghi)^{-1} }  n ) \\
&= ^{\zeta(g,h) \zeta(gh,i)} n
\end{align*}
and hence $^{\phi(g)} \zeta(h,i) \zeta(g,hi)$ and $\zeta(g,h) \zeta(gh,i)$ induce the same map by conjugation on $N$ and hence just differ by an element of the centre so $\orm_b(g,h,i) \in Z$. 
Since $\zeta$ and $\phi$ have finite image, so does $\orm_b$, i.e.\ $\orm_b \in C^3_b(G,Z)$ and it is easy to see that $\orm_b$ is non-degenerate.

To see that $\orm_b$ satisfies $\delta^3 \orm_b = 0$ we calculate 
\begin{align*} 
 ^{\phi(g) \phi(h)} \zeta(i,k) ^{\phi(g)} \zeta(h,ik) \zeta(g,hik)
\end{align*}
for $g,h,i,k \in G$ in two different ways. First observe that
\begin{align*}
 ^{\phi(g) \phi(h)} \zeta(i,k) ^{\phi(g)} \zeta(h,ik) \zeta(g,hik) = & ^{\phi(g) \phi(h)} \zeta(i,k) \left( ^{\phi(g)} \zeta(h,ik) \zeta(g,hik) \right) \\
 = & ^{\phi(g) \phi(h)} \zeta(i,k) \orm_b(g,h,ik) \zeta(g,h) \zeta(gh,ik) \\
 = & \zeta(g,h) ^{\phi(gh)} \zeta(i,k) \orm_b(g,h,ik) \orm_b(g,h,ik) \\
 = & \zeta(g,h) \zeta(gh,i) \zeta(ghi,k)  \orm_b(g,h,ik) \orm_b(gh,i,k)
\end{align*}
then observe that
\begin{align*}
 ^{\phi(g) \phi(h)} \zeta(i,k) ^{\phi(g)} \zeta(h,ik) \zeta(g,hik) = & \left( ^{\phi(g) \phi(h)} \zeta (i,k)  ^{\phi(g)} \zeta (h,ik) \right) \zeta (g,hik)  \\
= & ^{\phi(g)} \left( \orm_b(h,i,k) \zeta(h,i) \zeta(hi,k) \right) \zeta(g,hik) \\
= &  ^{\phi(g)} \zeta(h,i)  \zeta(g,hi) \zeta(ghi,k) ^{\phi(g)} \orm_b(h,i,k) \orm_b(g,hi,k) \\
= & \zeta(g,h) \zeta(gh,i) \zeta(ghi,k) \orm_b(g,h,i)  ^{\phi(g)} \orm_b(h,i,k) \orm_b(g,hi,k).
\end{align*}
Finally, comparing these two terms yields
\[
 \delta^3 \orm_b(g,h,i,k) = ^{\phi(g)} \orm_b(h,i,k) - \orm_b(gh,i,k) + \orm_b(g,hi,k) - \orm_b(g,h,ik) + \orm_b(g,h,i) = 0.
\]
So $\orm_b$ indeed defines a bounded cocycle. This shows Claim \ref{claim:o maps to centre and is cocycle}.
\end{proof}

 \begin{claim} \label{claim:obstruction independent of choices}
 The class $[\orm_b] \in \Hrm^3_b(G,Z)$ is independent of the choices made for $\zeta$ and $\phi$.
\end{claim}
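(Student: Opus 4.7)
The plan is to handle the two sources of non-uniqueness separately: first fix the lift $\phi$ and vary $\zeta$, then compare two lifts $\phi$, $\phi'$.

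\textbf{Step 1 (fixed $\phi$).} Let $\zeta, \zeta' \col G \times G \to N$ both satisfy the conclusion of Claim \ref{claim:existence of zeta to N} for the same $\phi$. Since $\zeta(g,h)$ and $\zeta'(g,h)$ induce the same inner automorphism of $N$, there is a unique map $c \col G \times G \to Z$ with $\zeta'(g,h) = c(g,h) \cdot \zeta(g,h)$; the finite images of $\zeta$ and $\zeta'$ force $c$ to have finite image, so $c \in C_b^2(G,Z)$, and $c(g,1) = c(1,g) = 0$. Substituting $\zeta' = c \cdot \zeta$ into equation (\ref{equ:obstruction}) and using that $Z$ is central in $N$ and that $G$ acts on $Z$ via $\phi$ (since $\Inn(N)$ acts trivially on $Z$), a direct computation yields $\orm_b' = \orm_b + \delta^2 c$ in $C_b^3(G,Z)$. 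Hence $[\orm_b'] = [\orm_b]$ in $\Hrm_b^3(G,Z)$.

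\textbf{Step 2 (varying $\phi$).} Let $\phi'$ be a second lift of $\psi$ with finite image and $\phi'(1) = 1$. Then $g \mapsto \phi'(g) \phi(g)^{-1}$ lands in $\Inn(N) \cong N/Z$ and has finite image, as both $\phi$ and $\phi'$ do. Picking a set-theoretic section $\Inn(N) \to N$ sending $1$ to $1$ produces a function $\nu \col G \to N$ with finite image and $\nu(1) = 1$ such that ${}^{\phi'(g)} n = \nu(g) \cdot ({}^{\phi(g)} n) \cdot \nu(g)^{-1}$ for all $n \in N$. I then set
\[
\zeta'(g,h) := \nu(g) \cdot ({}^{\phi(g)} \nu(h)) \cdot \zeta(g,h) \cdot \nu(gh)^{-1}
\]
and verify that $\zeta'$ has finite image, satisfies $\zeta'(g,1) = \zeta'(1,g) = 1$, and induces the inner automorphism $\phi'(g)\phi'(h)\phi'(gh)^{-1}$, so $\zeta'$ is a valid choice of data for $\phi'$. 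A direct substitution into equation (\ref{equ:obstruction}) for the pair $(\phi',\zeta')$, using centrality of $Z$ and the conjugation identity $\zeta(g,h) \cdot x = ({}^{\phi(g)\phi(h)\phi(gh)^{-1}} x) \cdot \zeta(g,h)$, gives $\orm_b' = \orm_b$ as functions on $G^3$. Combined with Step 1, any two pairs $(\phi_1,\zeta_1)$ and $(\phi_2,\zeta_2)$ produce cohomologous obstruction cocycles in $\Hrm_b^3(G,Z)$.

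The main obstacle is the existence of the lift $\nu$ with finite image in Step 2, which crucially uses that both $\phi$ and $\phi'$ have finite image (guaranteed by the finiteness of the image of $\psi$). The remainder reduces to careful algebraic manipulation exploiting the centrality of $Z$ and the defining identity for $\zeta$; no genuinely new idea is needed.
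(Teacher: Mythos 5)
Your proof is correct and rests on the same essential ingredients as the paper's: finite-image lift $\nu \col G \to N$ of $\phi'(g)\phi(g)^{-1}$, centrality of $Z$, and the defining conjugation identity for $\zeta$, leading to a coboundary difference $\delta^2$ of a $Z$-valued bounded function. The only real difference is organizational: the paper changes $\phi$ and $\zeta$ simultaneously and computes a single combined coboundary $z$, whereas you decouple the two variations — showing first that changing $\zeta$ for fixed $\phi$ shifts the cocycle by $\delta^2 c$ with $c$ bounded, and second that changing $\phi$ can be compensated by a canonical choice $\zeta'(g,h) = \nu(g)\,{}^{\phi(g)}\nu(h)\,\zeta(g,h)\,\nu(gh)^{-1}$ that leaves the cocycle $\orm_b$ \emph{exactly} unchanged. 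This two-step decomposition is slightly cleaner to verify, since Step 2 produces on-the-nose equality rather than merely cohomologous cocycles, so the only coboundary bookkeeping occurs in the simpler Step 1; one should just make sure to check (as you implicitly do) that your $\zeta'$ is an admissible choice for $\phi'$, i.e.\ that it has finite image, vanishes when an argument is $1$, and conjugates by $\phi'(g)\phi'(h)\phi'(gh)^{-1}$.
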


\begin{proof}[Proof of Claim \ref{claim:obstruction independent of choices}]
Let $\phi, \phi' \col G \rightarrow \Aut(N)$ be two lifts of $\psi$ as above and choose corresponding functions $\zeta$, $\zeta' \col G \to N$ representing the defect of $\phi$ and $\phi'$ as above.
There is a finite function $\nu \col G \rightarrow N$ with finite image such that $\phi(g) = \bar \nu(g) \phi'(g)$ where $\bar \nu(g)$ is the element in $Inn(N) \subset \Aut(N)$ corresponding to the conjugation by $\nu(g)$.
We calculate
\begin{align*}
\phi(g) \phi(h) \phi(gh)^{-1} &= \bar \nu(g) ^{\phi'(g)} \bar \nu(h) \left( \phi'(g) \phi'(h) \phi'(gh)^{-1} \right) \bar \nu(gh)^{-1}.
\end{align*}
We see that for every $n \in N$,
\begin{align*}
^{\zeta(g,h)} n &= ^{\phi(g) \phi(h) \phi(gh)^{-1}} n \\
&= ^{\bar \nu(g) ^{\phi'(g)} \bar \nu(h) \left( \phi'(g) \phi'(h) \phi'(gh)^{-1} \right) \bar \nu(gh)^{-1} } n \\
&= ^{\nu(g) ^{\phi'(g)} \nu(h) \zeta'(g,h) \nu(gh)^{-1}} n.
\end{align*}
So $\zeta(g,h)$ and $\nu(g) ^{\phi'(g)} \nu(h) \zeta'(g,h) \nu(gh)^{-1}$ only differ by an element of the centre.
Hence define $z(g,h) \in Z$ via
\[
 \zeta(g,h) = z(g,h) \nu(g) ^{\phi'(g)} \nu(h) \zeta'(g,h) \nu(gh)^{-1}
\]
and note that $z:G \times G \rightarrow Z$ is a function with finite image
as all functions involved in its definition have finite image.

It is a calculation to show that $\orm_b$, the obstruction defined via the choices $\phi$ and $\zeta$ and $\orm'_b$, the obstruction defined via the choices $\phi'$ and $\zeta'$ differ by $\delta^2 z$ and hence define the same class in bounded cohomology. This shows Claim \ref{claim:obstruction independent of choices}.
\end{proof}

 We call this class $[\orm_b] \in \Hrm^3_b(G,Z)$ the \emph{obstruction for extensions $G$ by $N$ inducing $\psi$}
 and denote it by $\omega_b(G,N, \psi)$ or $\omega_b$. We have seen that $\omega_b$ is a well defined class that depends only on $G$, $N$ and $\psi \col G \to \Out(N)$. Next we show that it is an obstruction to (bounded) extensions.
 
\begin{claim} \label{claim:obstruction encodes extensions exist}
Let $\omega_b \in \Hrm^3_b(G,Z)$ be as above. Then $\omega_b = 0 \in \Hrm^3_b(G,Z)$ if and only if $\Ecl_b(G,N,\psi) \not = \emptyset$. Moreover, $c^3(\omega_b)$ is equal to the classical obstruction.
\end{claim}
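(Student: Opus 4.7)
The plan is to verify both parts of the claim using the correspondence between bounded extensions and non-abelian cocycles established in Subsection \ref{subsec:group extensions induce non-abelian cocycle}.

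\emph{Forward direction.} Suppose $[\orm_b] = 0$ in $\Hrm_b^3(G,Z)$. By Proposition \ref{prop:non-degenerate cocycles} there is a non-degenerate $c \in NC^2_b(G,Z)$ with $\delta^2 c = \orm_b$, and because $(Z,\|\cdot\|)$ has finite balls, any bounded $Z$-valued function takes only finitely many values, so $c$ has finite image. Define $\erm \col G \times G \to N$ by $\erm(g,h) = c(g,h)^{-1} \zeta(g,h)$; this is non-degenerate and has finite image in $N$. A direct computation using Equation (\ref{equ:obstruction}), the centrality of $c$, and $\delta^2 c = \orm_b$ shows that $(\erm,\phi)$ satisfies condition (iii) of Definition \ref{defn:non-abelian cocycle}, while conditions (i) and (ii) are inherited from $(\zeta,\phi)$. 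Since $\phi$ also has finite image by construction, Proposition \ref{prop:non-abelian cocycles induce extensions}(2) then yields an extension in $\Ecl_b(G,N,\psi)$.

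\emph{Backward direction.} Given a bounded extension, apply Proposition \ref{prop: extension induce non-abelian cocycle}(2) with the fixed lift $\phi$ (which has finite image) to obtain a quasihomomorphic section $\sigma \col G \to E$ with $\sigma(1)=1$ and $\phi_\sigma = \phi$. Then $\erm_\sigma \col G \times G \to N$ has finite image (namely $\iota^{-1}(D(\sigma))$), and $(\erm_\sigma,\phi)$ is a non-abelian cocycle by Proposition \ref{prop: extension induce non-abelian cocycle}(1). Since $\erm_\sigma(g,h)$ and $\zeta(g,h)$ both realise conjugation by $\phi(g)\phi(h)\phi(gh)^{-1}$ on $N$, the element $c(g,h) := \erm_\sigma(g,h)\zeta(g,h)^{-1}$ lies in $Z$; moreover $c$ is non-degenerate and has finite image in $Z$, so $c \in NC^2_b(G,Z)$. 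Substituting $\erm_\sigma = c \zeta$ into the non-abelian cocycle identity and comparing with Equation (\ref{equ:obstruction}) gives $\delta^2 c = \pm \orm_b$, whence $[\orm_b] = 0$ in $\Hrm_b^3(G,Z)$.

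For the moreover statement, regard $\orm_b$ as an element of $C^3(G,Z)$. By construction $\orm_b$ is precisely the $3$-cocycle produced by the classical construction of the obstruction class $\omega$ from the lifts $(\phi,\zeta)$ (compare the proof of Theorem 6.6 of \cite{brown}), and the finite image hypotheses on $\phi$ and $\zeta$ play no role in the unbounded setting. Since the classical analogue of Claim \ref{claim:obstruction independent of choices} shows that the resulting class in $\Hrm^3(G,Z)$ is independent of the chosen lifts, this gives $[\orm_b] = \omega$ in $\Hrm^3(G,Z)$, and so $c^3(\omega_b) = \omega$ because $c^3$ is induced by the inclusion $C^*_b(G,Z) \hookrightarrow C^*(G,Z)$. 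The main subtlety will be in the forward direction: one must use the finite balls hypothesis on $(Z,\|\cdot\|)$ to ensure that the primitive $c$ of $\orm_b$ not only is bounded but takes only finitely many values, which is what propagates to $\erm$ having finite image in $N$ and hence produces a genuinely \emph{bounded} extension.
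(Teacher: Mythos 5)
Your proposal is correct and follows essentially the same route as the paper: pass through the non-abelian cocycle correspondence, use the finite-balls hypothesis to turn a bounded primitive of $\orm_b$ into a finite-image cochain and hence a finite-image $\erm$, and conversely extract a finite-image $Z$-valued $2$-cochain from a quasihomomorphic section via Proposition \ref{prop: extension induce non-abelian cocycle}(2). The one place you are slightly more explicit than the paper is the ``moreover'' statement: the paper establishes the vanishing equivalences in both the bounded and unbounded settings and leaves $c^3(\omega_b)=\omega$ implicit, whereas you correctly observe that $\orm_b$ is literally a representative of the classical obstruction cocycle (the finite-image constraints on $\phi,\zeta$ only narrow the allowed choices) and invoke the classical independence-of-choices to conclude.
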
 
Recall that $c^3 \col \Hrm^3_b(G, Z) \to \Hrm^3(G,Z)$ denotes the comparison map.

 \begin{proof}[Proof of Claim \ref{claim:obstruction encodes extensions exist}]
Suppose that $c^3(\omega_b) = 0 \in \Hrm^3(G,Z)$. Then there is $\beta \in C^2(G,Z)$ possibly with unbounded, i.e.\ infinite image, such that
\begin{align} \label{equ:cocycle}
\orm_b(g,h,i) = ^{\phi(g)} \beta(h,i) -\beta(gh,i) + \beta(g,hi) - \beta(g,h)
\end{align}
for all $g,h,i \in G$. Moreover we may choose $\beta$ such that for all $g \in G$, $\beta(1,g) = \beta(g,1) = 0$ by Proposition \ref{prop:non-degenerate cocycles} since $\orm_b$ is non-degenerate.

 Define $\erm \col G \times G \to N$ via  $\erm(g,h) = \zeta(g,h) \beta(g,h)^{-1}$.
We will show that $(\erm, \phi)$ is a non-abelian cocycle with respect to $(G, N, \psi)$. Indeed, $\phi$ is a lift of $\psi$ which satisfies $\phi(1) = 1$ and for all $g \in G$, $\erm(g,1) = \erm(1,g) = 1$. Moreover, observe that for all $g,h \in G$ and $n \in N$,
\[
^{\erm(g,h)} n = ^{\zeta(g,h) \beta(g,h)^{-1}} n = ^{\zeta(g,h)} n = ^{ \phi(g) \phi(h) \phi(gh)^{-1}} n
\]
as $\beta(g,h)$ is in the centre of $N$.
Finally, for all $g,h,i \in G$ we calculate
\begin{align*}
^{\phi(g)} \zeta(h,i) \zeta(g,hi) &= \orm_b(g,h,i) \zeta(g,h) \zeta(gh,i) \\
^{\phi(g)} \left( \zeta(h,i) \beta(h,i)^{-1} \right) \zeta(g,hi) \beta(g,hi)^{-1} &= \zeta(g,h) \beta(g,h)^{-1} \zeta(gh,i) \beta(gh,i)^{-1} \\
^{\phi(g)}\erm(h,i) \erm(g,hi) &= \erm(g,h) \erm(gh,i)
\end{align*}
and hence indeed $(\erm, \phi)$ is a non-abelian cocycle with respect to $(G,N,\psi)$.

 By Proposition \ref{prop:non-abelian cocycles induce extensions}, $(\erm, \phi)$ gives rise to an extension of $G$ by $N$ which induces $\psi$ and hence $\Ecl(G,N,\psi) \not = \emptyset$.

Analogously, suppose that $\omega_b = 0$ in $\Hrm^3_b(G, Z)$.
Then we may find $\beta \in C^3_b(G, Z)$ satisfying Equation (\ref{equ:cocycle}), but with bounded i.e.\ \emph{finite} image.
Hence if we set $\erm(g,h) = \zeta(g,h) \beta(g,h)^{-1}$, we see that $\erm(g,h)$ has finite image as well, as both $\zeta$ and $\erm$ have.
By the above argument $(\erm, \phi)$ is a non-abelian cocycle and, as both $\erm$ and $\phi$ have finite image, $(\erm,\phi)$ gives rise to a bounded extension of $(N, G, \psi)$ by $(2)$ of Proposition \ref{prop:non-abelian cocycles induce extensions}.
Hence $\Ecl_b(G,N,\psi) \not = \emptyset$.

On the other hand, suppose that $\Ecl(G,N,\psi) \not = \emptyset$. 
This means that there is some extension $1 \to N \to E \to G \to 1$ of $G$ by $N$ which induces $\psi$.
By Propositin \ref{prop: extension induce non-abelian cocycle}, there is a section $\sigma \col E \to G$ such that 
$\phi_\sigma = \phi$ and then $(\erm_\sigma, \phi)$ is a non-abelian cocycle with respect to $(G, N, \psi)$.

Observe that for all $g,h \in G$, $n \in N$,
\[
^{\erm_\sigma(g,h)} n = ^{\phi(g) \phi(h) \phi(gh)^{-1}} n = ^{\zeta(g,h)} n
\]
and hence there is an $\beta(g,h) \in Z<N$ such that $\erm_\sigma(g,h) = \zeta(g,h) \beta(g,h)^{-1}$. 
As $(\erm_\sigma, \phi)$ satisfies $(iii)$ of Definition \ref{defn:non-abelian cocycle}, we see that for all $g,h,i \in G$
\begin{align*}
^{\phi(g)}(\erm_\sigma(h,i)) \erm_\sigma(g,hi) &= \erm_\sigma(g,h) \erm_\sigma(gh,i) \\
^{\phi(g)} \left( \zeta(h,i) \beta(g,h)^{-1} \right) \zeta(g,hi) \beta(g,hi)^{-1} &= \zeta(g,h) \beta(g,h)^{-1} \zeta(gh,i) \beta(gh,i)^{-1} \\
^{\phi(g)} \zeta(h,i) \zeta(g,hi) &= \left( ^{\phi(g)} \beta(h,i) -\beta(gh,i) + \beta(g,hi) - \beta(g,h) \right) \zeta(g,h) \zeta(gh,i)
\end{align*}
so
\[
\orm_b(g,h,i) = ^{\phi(g)} \beta(h,i) -\beta(gh,i) + \beta(g,hi) - \beta(g,h) = \delta^2 \beta (g,h,i)
\]
and hence $c^3(\omega_b) = 0 \in \Hrm^3(G, Z)$.

Now suppose that $\Ecl_b(G,N,\psi) \not = \emptyset$. 
This means that there is some extension $1 \to N \to E \to G \to 1$ of $G$ by $N$ which induces $\psi$ and which is in addition bounded. Applying $(2)$ of Proposition \ref{prop: extension induce non-abelian cocycle} once more we see that there is a section $\sigma \col G \to E$ such that $\sigma$ is a quasihomomorphism satisfying that $\sigma(1) = 1$ by Proposition \ref{prop:may assume that sigma one is one} and $\phi_\sigma = \phi$.
As $\sigma$ is a quasihomomorphism, $\erm_\sigma$ has finite image.

As $\erm_\sigma$ and $\zeta$ have finite image the map $\beta \in C^2(G, Z)$ defined via
$\erm_\sigma(g,h) = \zeta(g,h) \beta(g,h)^{-1}$ also has finite image and hence $\beta \in C^2_b(G, Z)$.
The above calculations show that $\orm_b = \delta^2 \beta$ and hence $\omega_b = 0$ in $\Hrm^3_b(G, Z)$.
This finishes the proof of Claim \ref{claim:obstruction encodes extensions exist}.
 \end{proof}

Now suppose that $\Ecl_b(G, N, \psi) \not = \emptyset$.
then there is an extension $1 \to N \to E_0 \to G \to 1$ which induces $\psi$ 
and a section $\sigma_0 \col G \to E_0$ such that $\phi = \phi_{\sigma_0}$ and $\erm_0 :=\erm_{\sigma_0}$ have finite image and $(\erm_0, \phi)$ is a non-abelian cocycle with respect to $(G, N, \psi)$.

\begin{claim} \label{claim:correspondence h2 and extensions}
Let $\Psi \col \Hrm^2(G, Z) \to \Ecl(G, N, \psi)$ be the map defined via 
\[
\Psi \col [\alpha] \mapsto \left( 1 \to N \to \Erm(\alpha \cdot \erm_0, \phi) \to G \to 1 \right),
\]
where $\alpha$ is a non-degenerate representative. Then $\Psi$ 
is a bijection which restricts to a bijection between $\im(c^2) \subset \Hrm^2(G,Z)$ and $\Ecl_b(G, N, \psi) \subset \Ecl(G, N, \psi)$.
\end{claim}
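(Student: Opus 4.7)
The approach is to translate everything into the language of non-abelian cocycles via Propositions \ref{prop: extension induce non-abelian cocycle}, \ref{prop:non-abelian cocycles induce extensions} and \ref{prop: non-abelian cocycles and cohomology stuff}. Since $\phi$ is fixed throughout (coming from the given bounded section $\sigma_0$), the map $\Psi$ amounts to twisting the fixed non-abelian cocycle $\erm_0$ by central cocycles, and then reading off the resulting extension via $\Erm(-, \phi)$.

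First I would verify well-definedness. For a non-degenerate representative $\alpha$, the pair $(\alpha \cdot \erm_0, \phi)$ is a non-abelian cocycle with respect to $(G, N, \psi)$: conditions (i) and (ii) of Definition \ref{defn:non-abelian cocycle} are immediate because $\alpha$ is non-degenerate and $\alpha(g,h) \in Z$ is central in $N$, while (iii) is precisely the content of part (\ref{item:two non-abelian cocycles differ by cocycle}) of Proposition \ref{prop: non-abelian cocycles and cohomology stuff} applied to $(\erm_0, \phi)$ and $\crm = \alpha$, using $\delta^2 \alpha = 0$. Proposition \ref{prop:non-abelian cocycles induce extensions} then produces the extension. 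Independence of the non-degenerate representative follows from part (\ref{item:two equivalent non-abelian cocycles}) of Proposition \ref{prop: non-abelian cocycles and cohomology stuff}: two such representatives differ by $\delta^1 \zrm$ with $\zrm(1)=0$ (using Proposition \ref{prop:non-degenerate cocycles}), which makes the associated extensions equivalent.

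For bijectivity I would again use both parts of Proposition \ref{prop: non-abelian cocycles and cohomology stuff}. Injectivity is the converse of the previous point: if $\Psi[\alpha]$ and $\Psi[\alpha']$ are equivalent extensions, part (\ref{item:two equivalent non-abelian cocycles}) supplies $\zrm$ with $\alpha - \alpha' = \delta^1 \zrm$, so $[\alpha]=[\alpha']$. For surjectivity, given any extension $E$ of $G$ by $N$ inducing $\psi$, part (\ref{item:general lift}) of Proposition \ref{prop: extension induce non-abelian cocycle} furnishes a section $\sigma$ with $\sigma(1)=1$ and $\phi_\sigma = \phi$, and the standard correspondence (Propositions \ref{prop: extension induce non-abelian cocycle} and \ref{prop:non-abelian cocycles induce extensions}) identifies $E$ with $\Erm(\erm_\sigma, \phi)$ up to equivalence. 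Part (\ref{item:two non-abelian cocycles differ by cocycle}) of Proposition \ref{prop: non-abelian cocycles and cohomology stuff} then yields $\crm \col G \times G \to Z$ with $\delta^2 \crm = 0$ and $\erm_\sigma = \crm \cdot \erm_0$, giving $E$ equivalent to $\Erm(\crm \cdot \erm_0, \phi) = \Psi[\crm]$.

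For the restricted bijection the key observation is that, because $(Z, \|\cdot\|)$ has finite balls, a function into $Z$ is bounded if and only if it has finite image. If $[\alpha] \in \im(c^2)$, pick a bounded non-degenerate representative (possible by Proposition \ref{prop:non-degenerate cocycles}); then $\alpha$ has finite image, so $\alpha \cdot \erm_0$ has finite image in $N$, and Proposition \ref{prop:non-abelian cocycles induce extensions} (2) gives that $\Psi[\alpha]$ is bounded. Conversely, if $\Psi[\alpha] \in \Ecl_b(G, N, \psi)$, part (\ref{item:general lift}) of Proposition \ref{prop: extension induce non-abelian cocycle} supplies a quasihomomorphic section $\sigma$ with $\phi_\sigma = \phi$, so $\erm_\sigma$ has finite image; then the class $[\crm]$ produced in the surjectivity argument is represented by $\crm = \erm_\sigma \cdot \erm_0^{-1}$ which takes finitely many values in $Z$, hence defines an element of $\im(c^2)$, and injectivity forces $[\alpha] = [\crm] \in \im(c^2)$. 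The only delicate point will be keeping non-degeneracy and finite-image hypotheses aligned throughout the translation, which Proposition \ref{prop:non-degenerate cocycles} together with the finite-balls hypothesis handles cleanly.
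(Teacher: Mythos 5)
Your proposal is correct and follows essentially the same route as the paper: translate to non-abelian cocycles via Propositions \ref{prop: extension induce non-abelian cocycle}, \ref{prop:non-abelian cocycles induce extensions}, \ref{prop: non-abelian cocycles and cohomology stuff}, prove well-definedness and injectivity from part (\ref{item:two equivalent non-abelian cocycles}), surjectivity from part (\ref{item:two non-abelian cocycles differ by cocycle}) after fixing $\phi_\sigma = \phi$, and handle the restricted bijection by tracking finite image in both directions. You make the use of the finite-balls hypothesis slightly more explicit than the paper does, but the argument is the same.
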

Here $\alpha \cdot \erm_0$ denotes the map $\alpha \cdot \erm_0 \col G \times G \to N$ defined via $\alpha \cdot \erm_0 \col (g,h) \mapsto \alpha(g,h) \cdot \erm_0(g,h)$. 

\begin{proof}[Proof of Claim \ref{claim:correspondence h2 and extensions}]
We first show that the above map is 
well defined: Let $\alpha \in C^2(G, Z)$ be a non-degenerate cocycle. Then $\delta^2 \alpha = 0$ and hence by Proposition \ref{prop: non-abelian cocycles and cohomology stuff}, $(\alpha \cdot \erm_0, \phi)$ is a non-abelian cocycle with respect to $(G,N, \psi)$.
If $[\alpha'] = [\alpha]$ in $\Hrm^2(G,Z)$ then there is an element $\zrm \in C^1(G, Z)$ such that $\alpha = \alpha' + \delta^1 \zrm$. Then, according to point $(2)$ of Proposition \ref{prop: non-abelian cocycles and cohomology stuff}, the group extensions are equivalent. 
Hence $\Psi$ is well defined.

Now suppose that $\Psi([\alpha]) = \Psi([\alpha'])$.
Then, according to Proposition \ref{prop: non-abelian cocycles and cohomology stuff} ($2$) we have that there is a $\zrm \in C^1(G, Z)$ such that
$(\delta^1 \zrm) \alpha' \erm_0 = \alpha \erm_0$ and hence
$\delta^1 \zrm \alpha' = \alpha$. Hence $[\alpha] = [\alpha']$ in $\Hrm^2(G,Z)$, so $\Psi$ is injective.

Next we show that $\Psi$ is surjective.
Let $1 \to N \to E' \to G \to 1$ be any extension of $G$ by $N$ inducing $\psi$.
By Proposition \ref{prop: extension induce non-abelian cocycle}, there is a section $\sigma' \col G \to E$ such that $\phi_{\sigma'} = \phi$ and such that $(\erm', \phi)$ is a non-abelian cocycle with $\erm' = \erm_{\sigma'}$.
Hence both $(\erm', \phi)$ and $(\erm_0, \phi)$ are non-abelian cocycles with respect to $(G, N, \psi)$ and by Proposition \ref{prop: non-abelian cocycles and cohomology stuff} there is a map $\beta \in C^2(G, Z)$ such that
$\erm' = \beta \cdot \erm_0$ and $\delta^2 \beta = 0$. Then $\beta$ induces a class and hence $\Psi([\beta])$ corresponds to this extension. This shows that $\Psi$ is surjective and hence that $\Psi$ is a bijection.
If $1 \to N \to E' \to G \to 1$ is a bounded extension then we may choose a section $\sigma' \col G \to E'$ such that $\erm'$ as above has finite image. Moreover, $\beta$ as above is bounded as both $\erm'$ and $\erm_0$ are. Hence $[\beta] \in \im(c^2)$ and hence $\Phi(\im(c^2)) \supset \Ecl_b(G,N,\psi)$.

Suppose that $[\alpha] \in \im(c^2)$. Then we may assume that $\alpha \in C^2_b(G, Z)$, i.e.\ that $\alpha$ has finite image and that $\alpha$ is non-degenerate.
Hence $\alpha \cdot \erm_0$ has finite image and hene the extension corresponding to $(\alpha \cdot \erm_0, \phi)$ is bounded by $(2)$ of Proposition \ref{prop:non-abelian cocycles induce extensions}.
This shows that $\Psi(\im(c^2)) \subset \Ecl_b(G,N, \psi)$.
\end{proof}

This concludes the proof of Theorem \ref{thm:main}.

\section{The set of obstructions and examples} \label{sec:examples and obstructions}

Theorem \ref{thm:main} provides a characterisation of non-trivial classes $\omega_b \in \Hrm^3_b(G,Z)$, called obstructions.
One may wonder which such classes $\omega_b \in \Hrm^3_b(G,Z)$ arise in this way.
Recall that in the case of general group extensions, every cocycle in $\Hrm^3(G,Z)$ may be realised as such an obstruction:

\begin{reptheorem}{thm:classical obstructions}
For any $G$-module $Z$ and any $\alpha \in \Hrm^3(G, Z)$  there is a group $N$  with $Z = Z(N)$ and a homomorphism $\psi \col G \to \Out(N)$ extensing the $G$-action on $Z$ such that $\alpha = \omega(G, N, \psi)$ in $\Hrm^3(G,N, \psi)$.
\end{reptheorem}

For a normed $G$-module $Z$ with finite balls
and a $G$-action with finite image define the \emph{set of bounded obstructions} $\mathcal{O}_b(G,Z) \subset \Hrm_b^3(G,Z)$ as
\[
 \mathcal{O}_b(G,Z) = \{ \omega_b(G,N,\psi) \in \Hrm^3_b(G,Z) \mid Z = Z(N), ^{\psi(g)}z = g \cdot z, \psi: G \to \Out(N) \text{ finite} \}.
\]
We refer to the introduction for the definition of $\mathcal{F}(G,Z)$ and observe that Theorem \ref{theorem:obstructions} from the introduction may now be restated as follows:
\begin{reptheorem}{theorem:obstructions}
Let $G$ be a group and $Z$ be a normed $G$-module with finite balls and a $G$-action with finite image. Then
\[
\mathcal{O}_b(G,Z) = \mathcal{F}(G,Z)
\]
as subsets of $\Hrm^3_b(G,Z)$.
\end{reptheorem}

This fully characterises obstructions we obtain in bounded cohomology.
\begin{proof}
We have just seen that $\mathcal{O}_b(G,Z) \subset \mathcal{F}(G,Z)$, as we may choose $\omega_b$ in the proof of Theorem \ref{thm:main} so that it factors through $\Out(N)$ via $\psi \col G \to \Out(N)$ and $\Out(N)$ is a finite group.

To show $\mathcal{F}(G,Z) \subset \mathcal{O}_b(G,Z)$ we need to show that for every finite group $M$ and any class $\alpha \in \Hrm^3(M,Z)$ there is a group $N$ and a homomorphism $\psi \col M \to \Out(N)$
which induces $\alpha$ as a cocycle.
We recall a construction from \cite{maclane}. Working with non-degenerate cocycles (see Subsection \ref{subsec:bounded cohomology}) we may assume that $\alpha(1,g,h) = \alpha(g,1,h) = \alpha(g,h,1) = 0$ for all $g,h \in G$.

Define the abstract symbols $\langle g,h \rangle$ for each $1 \not = g,h \in M$ and set $\langle g,1 \rangle= \langle 1,g \rangle = \langle 1,1 \rangle=1$ for the abstract symbol $1$.  Let $F$ be the free group on these symbols and set $1$ to be the identity element and set $N = Z \times F$. Define the function $\phi: M \rightarrow \Aut(N)$ so
that for $g \in M$  the action of $\phi(g)$ on $F$ is given by
\[
^{\phi(g)} \langle h,i \rangle  = \alpha(g,h,i) \langle g,h \rangle \langle gh,i \rangle \langle g,hi \rangle^{-1}
\]
and so that the action of $\phi(g)$ on $Z$ is given by the $M$-action on $Z$.
A direct calculation yields that for each $g \in M$, the map $\phi(g) \col N \to N$ indeed defines an isomorphism. Here, we need the assumption $\alpha(1,g,h)=\alpha(g,1,h)=\alpha(g,h,1)=0$.
It can be seen that for all $n \in N$ and $g_1, g_2 \in F$
\[
^{\phi(g_1) \phi(g_2)} n = ^{\langle g_1,g_2 \rangle} \text{} ^{\phi(g_1 g_2)} n
\]
where we have to use the fact the $\alpha$ is a cocycle.
Hence, $\phi \col M \to \Aut(N)$ is well defined and induces a \emph{homomorphism} $\psi \col M \to \Out(N)$. It is easy to see that $\psi$ induces the $M$-action on $Z$.
If $M \not \cong \Z_2$, the centre of $N$ is $Z$. In this case, to calculate $\omega_b(M,N,\psi)$ we choose as representatives for $\phi(g)\phi(h)\phi(gh)^{-1}$ simply $\langle g,h \rangle$ and then see by definition that $\omega_b(M,N,\psi)$ is precisely $\alpha$. 

If $M = \Z_2$ then the centre of $N$ is not $Z$.
However, we can enlarge $M$ by setting $\tilde{M} = M \times \Z_2$.
We have both a homomorphism $\pi: \tilde{M} \to M$ via $(m,z) \mapsto m$ and a homomorphism $\iota: M \to \tilde{M}$ via $m \mapsto (m,1)$ such that $\pi \circ \iota = id_M$. Let $\tilde \alpha \in \Hrm^3(\tilde{M},Z)$ be the pullback of $\alpha$ via $\pi$. Let $\tilde{N}$ be the group constructed as above with this cocycle and let $\tilde \phi: \tilde{M} \to \Aut(\tilde N)$ and $\tilde \psi: \tilde{M} \to \Out(\tilde N)$ be the corresponding functions.
The centre of $\tilde{N}$ is $Z$. 
Set $\psi \col M \to \Out(\tilde{N})$ via $\psi = \tilde \psi \circ \iota$.
Then the obstruction $\omega_b(M, \tilde{N}, \psi)$ can be seen to be $\alpha$. This shows Theorem \ref{theorem:obstructions}.
\end{proof}

\section{Examples and Generalisations} \label{sec:exmp and general}

We discuss Examples in Subsection \ref{subsec:examples} where we show in particular that the requirements in Definition \ref{defn:bounded extension} are necessary. Subsection \ref{subsec:generalisations} discusses possible generalisations of Theorem \ref{thm:main}.

\subsection{Examples} \label{subsec:examples}
The subset $\mathcal{E}_b(G,N,\psi) \subset \mathcal{E}(G,N,\psi)$ is generally neither empty nor all of $\mathcal{E}(G,N,\psi)$. For any hyperbolic group we have $\mathcal{E}_b(G,N,\psi) = \mathcal{E}(G,N,\psi)$ as the comparison map is surjective (\cite{mineyev}).
 We give different examples where the inclusion $\mathcal{E}_b(G,N,\psi) \subset \mathcal{E}(G,N,\psi)$ is strict.

 The examples we discuss will use the \emph{Heisenberg group} $\Heis$. This group fits into the central extension
 \[
1 \to \Z \to \Heis \to \Z^2 \to 1.
 \]
Elements of the Heisenberg group will be described by $[c,z]$, where $c \in \Z$ and $z \in \Z^2$. The group multiplication is given by
$
[c_1,z_1] \cdot [c_2, z_2] = [c_1 + c_2 + \omega(z_1, z_2), z_1 + z_2]
$
where $\omega(z_1,z_2) = \det(z_1,z_2)$, the determinant of the $2\times 2$-matrix $(z_1,z_2)$. Observe that $[c,z]^{-1} = [-c,-z]$, and that $^{[c_1,z_1]} [c_2,z_2] = [c_2 + 2 \omega(z_1, z_2), z_2]$.
The inner automorphisms are isomorphic to $\Z^2$ with the identification $\phi \col \Z^2 \to \Inn(\Heis)$ via $^{\phi(g)} [c,z] = [c + 2 \omega(g,z), z]$.
It is well-known that $\omega$ generates $\Hrm^2(\Z^2, \Z)$ and that $\omega$ can not be represented by a bounded cocycle, i.e.\ the comparison map $c^2 \col \Hrm^2_b( \Z^2, \Z) \to \Hrm^2( \Z^2, \Z)$ is trivial. 
 
\begin{exmp}  \label{exmp:first example heisenberg}
Let $G = \Z^2$, $N = \Z$ and let $\psi \col G \to \Out(N)$ be the homomorphism with trivial image. The direct product
\begin{align} \label{equ:trivial heisenberg}
1 \to N \to N \times G \to G \to 1
\end{align}
\sloppy clearly has a quasihomomorphic section that induces a finite map to $\Aut(N)$ and hence
$\mathcal{E}_b(G, N, \psi)  \not =  \emptyset$.
Let $Z(N) = \Z$ be equipped with the standard norm. Note that $\im(c^2) = \{ 0 \}$, for  $c^2 \col \Hrm^2_b(\Z^2, \Z) \to \Hrm^2(\Z^2, \Z)$ the comparison map. By Theorem \ref{thm:main}, $\Ecl_b(\Z^2, \Z, \psi)$ 
consists of exactly one element, which is the direct product described above. Note that the Heisenberg extension
 \[
 1 \to \Z \to \Heis \to \Z^2 \to 1
 \]
is not equivalent to (\ref{equ:trivial heisenberg}). This can be seen as $\Heis$ is not abelian.
Hence this extension is not bounded.
So in this case 
\[
\emptyset \not = \mathcal{E}_b(\Z^2,\Z,id) \subsetneq \mathcal{E}(\Z^2,\Z,id).
\]
\end{exmp}

\begin{exmp} \label{exmp:aut has finite image necessary}
The assumption that the quasihomomorphism $\sigma \col G \to E$ has to induce a map $\phi_\sigma \col G \to \Aut(N)$ with finite image may seem artificial, as the induced homomorphism $\psi \col G \to \Out(N)$ already has finite image.
However it is necessary as the following example shows.

Consider extensions of $G = \Z^2$ by $N = \Heis$ which induce $\psi \col G \to \Out(N)$ with trivial image. Again, $\Ecl_b(G, N, \psi)$ is not empty as it contains the extension corresponding to the direct product $1 \to \Heis \to \Heis \times \Z^2 \to \Z^2 \to 1$.
Moreover, $Z(N) = Z(\Heis) = \Z$ and just as in Example \ref{exmp:first example heisenberg} the comparison map
$c^2 \col \Hrm^2_b(\Z^2, \Z) \to \Hrm^2(\Z^2, \Z)$
is trivial, i.e.\ $\im(c^2) = \{ 0 \}$.
So up to equivalence there is just one bounded extension, namely the one corresponding to the direct product $\Heis \times \Z^2$.

Pick an isomorphism $\phi \col \Z^2 \to \Inn(\Heis)$.
We may construct the extension
\begin{align} \label{equ:z2 by heisenberg extension}
1 \to \Heis \to \Heis \rtimes_{\phi} \Z^2 \to \Z^2 \to 1
\end{align}
where $\Heis \rtimes_{\phi} \Z^2$ denotes the semi-direct product.
and observe that the action of $\Z^2$ on the centre of $\Heis$ is trivial as the automorphisms are all inner.
\begin{claim} \label{claim:heisenberg not equivl to direct }
The extension (\ref{equ:z2 by heisenberg extension}) is not equivalent to the extension $1 \to \Heis \to \Heis \times \Z^2 \to \Z^2 \to 1$. 
\end{claim}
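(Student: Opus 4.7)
The plan is to distinguish the two groups $E_1 = \Heis \times \Z^2$ and $E_2 = \Heis \rtimes_\phi \Z^2$ as abstract groups by comparing their centres. Since any equivalence of extensions in the sense of Definition \ref{defn:equivalence of group extensions} is in particular an isomorphism of the total spaces, it suffices to show $Z(E_1) \not\cong Z(E_2)$.

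The first step is immediate: the centre of a direct product is the product of centres, so $Z(E_1) = Z(\Heis) \times \Z^2 \cong \Z \times \Z^2 \cong \Z^3$.

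The second step is to compute $Z(E_2)$ directly. Using the semi-direct product multiplication and the explicit identification ${}^{\phi(g)}[c,z] = [c+2\omega(g,z),z]$ recorded just before the example, I would expand the condition that two generic elements $([c_1,z_1],g_1)$ and $([c_2,z_2],g_2)$ of $E_2$ commute. All scalar $c_i$ contributions cancel and, after using $\omega(z_1,z_2) = -\omega(z_2,z_1)$, the commutation condition reduces to the single identity
\[
\omega(g_1 + z_1, z_2) \;=\; \omega(g_2, z_1).
\]
For $([c_1,z_1],g_1)$ to be central this must hold for every $(z_2,g_2)$. Fixing $g_2 = 0$ and varying $z_2$ forces $g_1 + z_1 = 0$; then varying $g_2$ with $z_1$ fixed forces $z_1 = 0$, and hence $g_1 = 0$. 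Therefore $Z(E_2) = \{([c,0],0) \mid c \in \Z\} \cong \Z$.

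Since $Z(E_1) \cong \Z^3$ while $Z(E_2) \cong \Z$, the two groups are not isomorphic, so the two extensions cannot be equivalent. The main obstacle is really just the centre calculation for $E_2$: it is short but requires careful bookkeeping to isolate the intrinsic Heisenberg commutator $\omega$ from the twist introduced by $\phi$; everything else in the argument is immediate.
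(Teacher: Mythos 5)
Your proof is correct and follows exactly the same route as the paper: compute the centre of each total group (giving $\Z^3$ for the direct product and $\Z$ for the twisted product via the same commutation calculation) and conclude the groups are not isomorphic. The commutation identity $\omega(g_1+z_1,z_2)=\omega(g_2,z_1)$ you derive is the same one the paper records, just with a term moved across the equals sign, and your explicit two-step specialisation ($g_2=0$, then $z_1$ arbitrary) is a slightly more spelled-out version of the paper's one-line conclusion.
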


\begin{proof}
Indeed, we show that $\Heis \rtimes_{\phi} \Z^2$ is not isomorphic to $\Heis \times \Z^2$.
We will show that $\Zrm(\Heis \times \Z^2) \cong \Z^3$ and $\Zrm(\Heis \rtimes_{\phi} \Z^2) \cong \Z$, where $\Zrm(G)$ denotes the centre of the group $G$.
First, observe that 
$$
\Zrm(\Heis \times \Z^2) \cong \Zrm(\Heis) \times \Zrm(Z^2) \cong \Z \times \Z^2 \cong \Z^3.
$$
Now assume that $([c,z],n), ([c',z'],n') \in \Heis \rtimes_{\phi} \Z^2$ are two elements which commute. Then
\begin{eqnarray*}
([c,z],n) \cdot ([c',z'],n') &=& ([c',z'],n') \cdot ([c,z],n) \\
([c,z] \cdot ^n [c',z'], n + n') &=& ([c',z'] \cdot ^{n'} [c,z], n + n') \\
([c,z] \cdot [c' + 2 \omega(n,z'),z'], n + n') &=& ([c',z'] \cdot  [c + 2 \omega(n',z),z], n + n') \\
([c+ c' + 2 \omega(n,z') + \omega(z,z'), z + z'], n + n') &=& ([c'+ c + 2 \omega(n',z) + \omega(z',z) ,z + z'], n + n')
\end{eqnarray*}
and hence such elements satisfy
$$
\omega(n,z') =  \omega(n',z) + \omega(z',z) = \omega(n'+z',z).
$$
Hence, if $([c,z],n)$ is in the centre of
$\Heis \rtimes_{\phi} \Z^2$, then
$n$ and $z$ must be such that the above equation holds for every choice of $n'$ and $z'$, and hence $n=z=0 \in \Z^2$.
We conclude that the centre of  $\Heis \rtimes_{\phi} \Z^2$ is $\{ ([c,0],0) \mid c \in \Z \}$  and that
$$
\Zrm(\Heis \rtimes_{\phi} \Z^2) \cong \Z.
$$
Hence $\Heis \times \Z^2$ and $\Heis \rtimes_{\phi} \Z^2$ cannot be isomorphic.
\end{proof}

So extension (\ref{equ:z2 by heisenberg extension}) is not bounded.
On the other hand there are two special sort of sections $\sigma \col G \to \Heis \rtimes_{\phi} \Z^2 $:
\begin{itemize}
\item[(i)] The section $\sigma_1 \col g \mapsto (1,g)$ to (\ref{equ:z2 by heisenberg extension}) is a homomorphism and hence in particular a quasihomomorphism.
However, the induced map $\phi_{\sigma_1} \col G \to \Aut(\Heis)$, has as the image the full \emph{infinite} group of inner automorphisms.

\item[(ii)] On the other hand, the section $\sigma_2 \col g \mapsto ([1,-g],g)$ induces a trivial map $\phi_{\sigma_2} \col G \to \Aut(\Heis)$ as seen in the proof of Claim \ref{claim:heisenberg not equivl to direct }.
Indeed we calculate that for $g, h \in G$,
\[
\sigma_2(g) \sigma_2(h) \sigma_2(gh)^{-1} = ([\omega(g,h),0],0)
\]
and so $D(\sigma_2)$ is unbounded and $\sigma_2$ is not a quasihomomorphism.
\end{itemize}

We conclude that there is a section $\sigma_1$ which satisfies $(i)$ of Definition \ref{defn:bounded extension} and another section $\sigma_2$ which satisfies $(ii)$ of Definition \ref{defn:bounded extension} but no section which satisfies $(i)$ and $(ii)$ simultaneously. 
\end{exmp}

\subsection{Generalisations} \label{subsec:generalisations}

One interesting aspect of Theorem \ref{thm:main} is that it characterises certain classes in
\emph{third bounded cohomology}, namely the obstructions. 
Moreover we have seen that the obstructions for \emph{bounded} extensions factor through a finite group.
Finite groups are amenable and hence all such classes in third bounded cohomology will vanish when passing to real coefficients.

On the other hand every class in third \emph{ordinary} cohomology may be realised by an obstruction; see Theorem \ref{thm:classical obstructions}.
One may wonder if there is another type of extensions $\tilde{\Ecl} \subset \Ecl(G,N,\psi)$ which is empty if and only if a certain class $\tilde{\omega}$ is non-trivial in $\Hrm^3_b(G, \R)$. This would be interesting as non-trivial classes in third bounded cohomology with real coefficients are notoriously difficult to construct.

Recall that our Definition \ref{defn:bounded extension} of \textit{bounded} extensions $1 \to N \to E \to G \to 1$ required the existence of sections $\sigma \col G \to E$ which satisfied two conditions. Namely $(i)$ that $\sigma$ is a quasihomomorphism, and $(ii)$ that the induced a map $\phi_\sigma \col G \to \Aut(N)$ by conjugation has finite image.
One may wonder if 
a modification of conditions $(i)$ and $(ii)$ yield different such obstructions with different coefficients.
For modifications of $(i)$ there are some generalisations of the quasimorphisms by Fujiwara--Kapovich, most notably the one by Hartnick--Schweitzer \cite{hs}. However, there does not seem to be a natural generalisation of condition $(ii)$, i.e.\ a generalisation of $\phi_\sigma$ having finite image. However, such a generalisation  is necessary as else the obstructions factor through a finite group and will yield trivial classes with real coefficients. On the other hand, there has to be some restrictions on the sort of sections $\sigma$ allowed:
Consider the bounded cohomology of a free non-abelian group $F$.
Soma \cite{soma} showed that $\Hrm^3_b(F,\R)$ is infinite dimensional. But every extension $1 \to N \to E \to F \to 1$ will even have a homomorphic section $\sigma \col F \to E$. Without a condition on $\phi_\sigma$ there would be no obstruction for such extensions.

\section{Appendix: Equivalent Definitions of Quasihomomorphisms} \label{sec:appendix-equivalent def of qhm}

We now prove Proposition \ref{prop:equivalent qhm} which shows that the definition of quasihomomorphism given in \cite{fk} is equivalent to Definition \ref{def:fk-qm}.
Recall that for a set-theoretic map $\sigma \col G \to H$ we defined $\bar{D}(\sigma) \subset H$ as
\[
\bar{D}(\sigma) := \{ \sigma(h)^{-1} \sigma(g)^{-1} \sigma(gh) \mid g,h \in G \}.
\]
Suppose that $\sigma \col G \to H$ is a quasihomomorphism in the sense of Definition \ref{def:fk-qm}. We start by noting the following easy property.
\begin{claim} \label{claim:property_finite_conj}
Let $\sigma \col G \to H$ be a quasihomomorphism with defect group $\Delta$ and let $A \subset \Delta$ be a finite subset of $\Delta$. Then the set
\[
\{ ^{\sigma(g)} A  \mid g \in G \}
\]
is also a finite subset of $\Delta$.
\end{claim}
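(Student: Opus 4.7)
The plan is simply to invoke Proposition \ref{prop:basic properties of quasihomomorphisms}, which already does the heavy lifting. That proposition gives us two facts I need: first, that $\Delta$ is normal in the subgroup $H_0 < H$ generated by $\sigma(G)$, so that ${}^{\sigma(g)} a \in \Delta$ whenever $a \in \Delta$; and second, that the induced map $\phi \col G \to \Aut(\Delta)$ defined by $\phi(g) \col a \mapsto {}^{\sigma(g)} a$ has finite image in $\Aut(\Delta)$.

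Given these two facts, I would deduce the claim as a short bookkeeping step. Interpreting the set in the statement as $\bigcup_{g \in G} {}^{\sigma(g)} A$, I would simply rewrite it as
\[
\bigcup_{g \in G} {}^{\sigma(g)} A \;=\; \bigcup_{\alpha \in \phi(G)} \alpha(A),
\]
which is a finite union (since $\phi(G) \subset \Aut(\Delta)$ is finite) of finite sets (since $A$ is finite), hence a finite subset of $\Delta$.

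There is no real obstacle here: the claim should be viewed as an immediate corollary of Proposition \ref{prop:basic properties of quasihomomorphisms}, packaged in a form that will presumably be convenient for the subsequent appendix argument (namely, the proof that $\bar{D}(\sigma)$ is finite whenever $D(\sigma)$ is). The only care required is notational: ${}^{\sigma(g)} A$ must be read as the conjugate set $\sigma(g) A \sigma(g)^{-1} \subset \Delta$, and the set displayed in the claim as the union of these conjugate sets over $g \in G$, rather than as a set of subsets of $\Delta$.
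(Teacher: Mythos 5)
Your proof is correct and takes essentially the same approach as the paper: both invoke Proposition \ref{prop:basic properties of quasihomomorphisms} to get finiteness of the image of $\phi \col G \to \Aut(\Delta)$, and then observe that the set in question is a finite union of images of the finite set $A$ under these finitely many automorphisms. The clarifying remark about reading the displayed set as $\bigcup_{g} {}^{\sigma(g)}A$ is a sensible addition but does not change the substance.
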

\begin{proof}
By Proposition \ref{prop:basic properties of quasihomomorphisms}, the set of automorphisms $\{ a \mapsto ^{\sigma(g)} a\mid g \in G \} \subset \Aut(\Delta)$ is finite.
Hence we see that the set 
$\{ ^{\sigma(g)} A  \mid g \in G \}$ is the image of a finite set of $\Delta$ under finitely many automorphisms of $\Delta$ and hence a finite subset of $\Delta$.
\end{proof}

Recall that $D=D(\sigma)$, the defect of $\sigma$, is defined as $D(\sigma) := \{ \sigma(g) \sigma(h) \sigma(gh)^{-1} \mid g,h \in G \}$.
Observe that $\drm(1,1) = \sigma(1) \sigma(1) \sigma(1)^{-1} = \sigma(1)$ and hence $\sigma(1) \in D$. 
Moreover, we see that $\drm(g,g^{-1}) = \sigma(g) \sigma(g^{-1}) \sigma(1)^{-1}$, hence
$\sigma(g)^{-1} \in \sigma(g^{-1}) \cdot D_0$, where $D_0 = \sigma(1)^{-1} \cdot D^{-1} \subset \Delta$, a finite set.
Combining the above expressions we see that for every $g,h \in G$,
\[
\sigma(h)^{-1} \sigma(g)^{-1} \sigma(gh) \in \sigma(h^{-1}) D_0 \sigma(g^{-1}) D_0 D_0^{-1} \sigma((gh)^{-1})^{-1}.
\]

Now observe that the set
\[
D_1 = \{ \sigma(g^{-1}) D_0 D_0^{-1} \sigma(g^{-1})^{-1} \mid g \in G \} \subset \Delta
\] is finite by Claim \ref{claim:property_finite_conj}.
Hence
\begin{align*}
\sigma(h)^{-1} \sigma(g)^{-1} \sigma(gh) \in \sigma(h^{-1}) D_0 D_1 \sigma(g^{-1}) \sigma((gh)^{-1}).
\end{align*}
Using the claim again we see that
\[
D_2 = \{ \sigma(h^{-1}) D_0 D_1 \sigma(h^{-1})^{-1} \mid h \in G \}
\]
is finite and hence that
\[
\bar{D}(\sigma) = \{ \sigma(h)^{-1} \sigma(g)^{-1} \sigma(gh) \mid g,h \in G \}  \subset D_2 \sigma(h^{-1}) \sigma(g^{-1}) \sigma((gh)^{-1}) \subset D_2 D 
\]
so $\bar{D}(\sigma)$ is indeed a finite set.
This shows that any quasihomomorphism in the sense of Definition \ref{def:fk-qm} is a quasihomomorphism in the sense of \cite{fk}.

Now assume that $\sigma \col G \to H$ is a map such that the set $\bar{D} = \bar{D}(\sigma)$ is finite
and let $\bar{\Delta}$ be the group generated by $\bar{D}$.

Just as before we have the following claim:
\begin{claim} \label{claim:finite set for fk qhm}
Let $f \col G \to H$ be a map such that $\bar{D} = \bar{D}(f)$ is finite and let $\bar{\Delta}$ be the group generated by $\bar{D}$. If $A \subset \bar{\Delta}$ if a finite subset of $\bar{\Delta}$ then the set
\[
\{ ^{\sigma(g)^{-1}} A  \mid g \in G \}
\]
is also a finite subset of $\bar{\Delta}$.
\end{claim}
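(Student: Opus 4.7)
The plan is to mirror the proof of Claim \ref{claim:property_finite_conj} by first establishing, for $\bar{D}$ and $\bar{\Delta}$, an analogue of the fundamental identity from Proposition \ref{prop:basic properties of quasihomomorphisms}. Specifically, I will verify that for all $g,h,i \in G$,
\[
{}^{\sigma(i)^{-1}} \bar{\drm}(g,h) = \bar{\drm}(h,i) \, \bar{\drm}(g,hi) \, \bar{\drm}(gh,i)^{-1},
\]
where $\bar{\drm}(g,h) = \sigma(h)^{-1}\sigma(g)^{-1}\sigma(gh)$. This is a direct calculation: expanding the right-hand side and cancelling the interior $\sigma(hi)^{\pm 1}$ and $\sigma(ghi)^{\pm 1}$ pairs leaves $\sigma(i)^{-1}\sigma(h)^{-1}\sigma(g)^{-1}\sigma(gh)\sigma(i)$, which is precisely ${}^{\sigma(i)^{-1}} \bar{\drm}(g,h)$.

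From this identity, conjugation by $\sigma(i)^{-1}$ sends every generator $\bar{\drm}(g,h) \in \bar{D}$ of $\bar{\Delta}$ into the finite set $\bar{D}\cdot\bar{D}\cdot\bar{D}^{-1} \subset \bar{\Delta}$. Since $\bar{\Delta}$ is generated by $\bar{D}$ and conjugation is a group homomorphism, for each $i \in G$ this restricts to a well-defined homomorphism $\bar{\phi}(i) \col \bar{\Delta} \to \bar{\Delta}$. The collection $\{\bar{\phi}(i) \mid i \in G\}$ is finite: each $\bar{\phi}(i)$ is determined by its restriction to the finite generating set $\bar{D}$, and that restriction takes values in the finite set $\bar{D}\cdot\bar{D}\cdot\bar{D}^{-1}$, so only finitely many such restrictions are possible.

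Finally, for a finite subset $A \subset \bar{\Delta}$, the union $\bigcup_{g \in G} {}^{\sigma(g)^{-1}} A = \bigcup_{g \in G} \bar{\phi}(g)(A)$ is the image of the finite set $A$ under finitely many homomorphisms into $\bar{\Delta}$, hence is itself a finite subset of $\bar{\Delta}$, as required. The only nontrivial step is the derivation of the identity above, but this is routine bookkeeping of $\sigma$-terms rather than a genuine obstacle; all remaining steps are formal consequences that mirror the argument already used for $D$ and $\Delta$.
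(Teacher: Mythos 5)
Your proof is correct, and it takes essentially the same route as the paper's, which simply invokes Lemma~2.5 of Fujiwara--Kapovich to get that the conjugations $a \mapsto {}^{\sigma(g)^{-1}} a$ form a finite family acting on $\bar{\Delta}$, and then repeats the argument of Claim~\ref{claim:property_finite_conj}. The difference is that you unpack the cited lemma and re-prove it directly: your identity
\[
{}^{\sigma(i)^{-1}} \bar{\drm}(g,h) = \bar{\drm}(h,i)\,\bar{\drm}(g,hi)\,\bar{\drm}(gh,i)^{-1}
\]
is the exact $\bar{D}$-analogue of the rearranged identity used in the proof of Proposition~\ref{prop:basic properties of quasihomomorphisms}, and I verified it expands correctly. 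From it you conclude ${}^{\sigma(i)^{-1}}\bar{D} \subset \bar{D}\cdot\bar{D}\cdot\bar{D}^{-1}$, hence ${}^{\sigma(i)^{-1}}\bar{\Delta} \subset \bar{\Delta}$ and the maps $\bar{\phi}(i)$ are well-defined endomorphisms of $\bar{\Delta}$; determined by their values on the finite generating set $\bar{D}$ landing in a finite set, there are only finitely many of them, and the claim follows. Note that you only establish that the $\bar{\phi}(i)$ are (injective) endomorphisms of $\bar{\Delta}$, whereas Lemma~2.5 of \cite{fk} actually gives automorphisms (normality of $\bar{\Delta}$ in $\langle \sigma(G)\rangle$); your weaker statement is entirely sufficient for the conclusion, so this is not a gap. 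In short: same mechanism, but yours is self-contained rather than delegating the key finiteness input to the cited lemma.
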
 
\begin{proof}
This follows from the same argument as for Claim \ref{claim:property_finite_conj} using Lemma 2.5 of \cite{fk} instead of Proposition \ref{prop:basic properties of quasihomomorphisms}.
\end{proof}

Observe again that $\sigma(1)^{-1} = \sigma(1)^{-1} \sigma(1)^{-1} \sigma(1) \in \bar{D}(\sigma)$ and using that for all $g \in G$, $\sigma(g)^{-1} \sigma(g^{-1})^{-1} \sigma(1) \in \bar{D}$ we see that
$\sigma(g) \in \sigma(g^{-1})^{-1} \bar{D}_0$ where $\bar{D}_0 = \sigma(1) \bar{D}$.

Hence for every $g,h \in G$,
\[
\sigma(g) \sigma(h) \sigma(gh)^{-1} \in \sigma(g^{-1})^{-1} \bar{D}_0 \sigma(h^{-1})^{-1} \bar{D}_0 \bar{D}_0^{-1} \sigma(h^{-1} g^{-1})
\]
By Claim \ref{claim:finite set for fk qhm}, we see that the set
\[
\bar{D}_1 = \{ \sigma(h^{-1})^{-1} \bar{D}_0 \bar{D}_0^{-1} \sigma(h^{-1}) \mid h \in G \}
\]
is finite and hence
\[
\drm(g,h) = \sigma(g) \sigma(h) \sigma(gh)^{-1} \in \sigma(g^{-1})^{-1} \bar{D}_0 \bar{D}_1 \sigma(h^{-1})^{-1} \sigma(h^{-1} g^{-1}).
\]
Using the claim once more we see that the set
\[
\bar{D}_2 = \{ f(g^{-1})^{-1} \bar{D}_0 \bar{D}_1 f(g^{-1}) \mid g \in G \}
\]
is finite. Finally,
\[
\drm(g,h) = \sigma(g) \sigma(h) \sigma(gh)^{-1} \in \bar{D}_2 \sigma(g^{-1})^{-1} \sigma(h^{-1})^{-1} \sigma(h^{-1} g^{-1}) \subset \bar{D}_2 \bar{D}
\]
which is a finite set. Hence $D(\sigma)$ is finite. So every quasihomomorphism in the sense of \cite{fk} is also a quasihomomorphism in the sense of Definition \ref{def:fk-qm}.

\begin{acknowledgements}
I would like to thank my supervisor Martin Bridson for his helpful comments and support.
I would further like to thank the anonymous reviewer for many very helpful comments.
The author was funded by the Oxford-Cocker Graduate Scholarship. Part of this work was written at the Isaac Newton Institue while participating in the programme
\emph{Non-Positive Curvature Group Actions and Cohomology}, supported by the EPSRC Grant EP/K032208/1.
\end{acknowledgements}

\bibliographystyle{mscplain}
\bibliography{bib_QS}

\begin{thebibliography}{10}

\bibitem{brown}
Brown, K.~S., \emph{Cohomology of groups}, Graduate Texts in Mathematics,
  vol.~87, Springer-Verlag, New York-Berlin, 1982.

\bibitem{circle-actions}
Bucher, M., Frigerio, R., and Hartnick, T., \emph{A note on semi-conjugacy for
  circle actions}, Enseign. Math. 62 (2016), no.~3-4, 317--360.

\bibitem{calegari}
Calegari, D., \emph{scl}, MSJ Memoirs, vol.~20, Mathematical Society of Japan,
  Tokyo, 2009.

\bibitem{frigerio}
Frigerio, R., \emph{Bounded cohomology of discrete groups}, Mathematical
  Surveys and Monographs, vol. 227, American Mathematical Society, Providence,
  RI, 2017.

\bibitem{fps}
Frigerio, R., Pozzetti, M.~B., and Sisto, A., \emph{Extending
  higher-dimensional quasi-cocycles}, J. Topol. 8 (2015), no.~4, 1123--1155.

\bibitem{fk}
Fujiwara, K. and Kapovich, M., \emph{On quasihomomorphisms with noncommutative
  targets}, Geom. Funct. Anal. 26 (2016), no.~2, 478--519.

\bibitem{ghys_fr}
Ghys, E., \emph{Groupes d'hom\'{e}omorphismes du cercle et cohomologie
  born\'{e}e}, in ``The {L}efschetz centennial conference, {P}art {III}
  ({M}exico {C}ity, 1984)'', Contemp. Math., vol.~58, Amer. Math. Soc.,
  Providence, RI, 1987, pp.~81--106.

\bibitem{ghys}
Ghys, E., \emph{Groups acting on the circle}, Monograf\'{i}as del Instituto de
  Matem\'{a}tica y Ciencias Afines [Monographs of the Institute of Mathematics
  and Related Sciences], vol.~6, Instituto de Matem\'{a}tica y Ciencias Afines,
  IMCA, Lima, 1999, A paper from the 12th Escuela Latinoamericana de
  Matem\'{a}ticas (XII-ELAM) held in Lima, June 28-July 3, 1999.

\bibitem{gromov}
Gromov, M., \emph{Volume and bounded cohomology}, Inst. Hautes \'Etudes Sci.
  Publ. Math. (1982), no.~56, 5--99 (1983).

\bibitem{hs}
Hartnick, T. and Schweitzer, P., \emph{On quasioutomorphism groups of free
  groups and their transitivity properties}, J. Algebra 450 (2016), 242--281.

\bibitem{hull_osin}
Hull, M. and Osin, D., \emph{Induced quasicocycles on groups with
  hyperbolically embedded subgroups}, Algebr. Geom. Topol. 13 (2013), no.~5,
  2635--2665.

\bibitem{maclane_original}
MacLane, S., \emph{Cohomology theory in abstract groups. {III}. {O}perator
  homomorphisms of kernels}, Ann. of Math. (2) 50 (1949), 736--761.

\bibitem{maclane}
MacLane, S., \emph{Homology}, first ed., Springer-Verlag, Berlin-New York,
  1967, Die Grundlehren der mathematischen Wissenschaften, Band 114.

\bibitem{mineyev}
Mineyev, I., \emph{Bounded cohomology characterizes hyperbolic groups}, Q. J.
  Math. 53 (2002), no.~1, 59--73.

\bibitem{monod_invitation}
Monod, N., \emph{An invitation to bounded cohomology}, in ``International
  {C}ongress of {M}athematicians. {V}ol. {II}'', Eur. Math. Soc., Z\"urich,
  2006, pp.~1183--1211.

\bibitem{soma}
Soma, T., \emph{Bounded cohomology and topologically tame {K}leinian groups},
  Duke Math. J. 88 (1997), no.~2, 357--370.

\end{thebibliography}

\end{document}